\DeclareMathOperator*{\grad}{grad}
\newcommand{\real}{\mathbb{R}}
\newcommand{\vtran}{\mathcal{T}}
\newcommand{\norm}[1]{\left\lVert{#1}\right\rVert}
\newcommand{\abs}[1]{\left|{#1}\right|}
\newcommand{\ip}[2]{\left\langle{#1},{#2}\right\rangle}
\newtheorem{theorem}{Theorem}[section]
\newtheorem{assumption}{Assumption}[section]
\newtheorem{definition}{Definition}[section]
\newtheorem{proposition}{Proposition}[section]
\newtheorem{problem}{Problem}[section]
\title{Sufficient Descent Riemannian Conjugate Gradient Methods \thanks{This work was supported by JSPS KAKENHI Grant Number JP18K11184.}}
\author{Hiroyuki Sakai \and Hideaki Iiduka}
\begin{document}

\maketitle

\begin{abstract}
This paper considers sufficient descent Riemannian conjugate gradient methods with line search algorithms. We propose two kinds of sufficient descent nonlinear conjugate gradient method and prove that these methods satisfy the sufficient descent condition on Riemannian manifolds. One is a hybrid method combining a Fletcher--Reeves-type method with a Polak--Ribi\`ere--Polyak--type method, and the other is a Hager--Zhang-type method, both of which are generalizations of those used in Euclidean space. Moreover, we prove that the hybrid method has a global convergence property under the strong Wolfe conditions and the Hager--Zhang-type method has the sufficient descent property regardless of whether a line search is used or not. Further, we review two kinds of line search algorithm on Riemannian manifolds and numerically compare our generalized methods by solving several Riemannian optimization problems. The results show that the performance of the proposed hybrid methods greatly depends on the type of line search used. Meanwhile, the Hager--Zhang-type method has the fast convergence property regardless of the type of line search used.
\end{abstract}

\section{Introduction}
Nonlinear conjugate gradient methods aim to solve unconstrained optimization problems in Euclidean space. Conjugate gradient methods have been developed by Hestenes and Stiefel \cite{hestenes1952methods} for solving linear systems whose coefficient matrix is symmetric positive-definite. Fletcher and Reeves \cite{fletcher1964function} extended the conjugate gradient method to unconstrained nonlinear optimization problems. Theirs is the first nonlinear conjugate gradient method in Euclidean space. Since then, various nonlinear conjugate gradient methods have been proposed (see \cite{al1985descent, polak1969note, polyak1969conjugate, dai1999nonlinear}); they have been summarized by Hager and Zhang in \cite{hager2006survey}. A sufficient descent condition is used to analyze the global convergence of conjugate gradient methods with inexact line searches. Hager and Zhang \cite{hager2005new} proposed a conjugate gradient method whose search direction satisfies the sufficient descent condition regardless of whether a line search is used or not. In addition, Dai \cite{dai2010nonlinear} proposed nonlinear conjugate gradient methods that are generalizations of the Hager--Zhang method. His method also satisfies the sufficient descent condition regardless of whether a line search is used or not. A nonlinear conjugate gradient method that satisfies the sufficient descent condition is called a sufficient descent nonlinear conjugate gradient method. Narushima and Yabe summarized the sufficient descent nonlinear conjugate gradient methods in \cite{narushima2014survey}.

The conjugate gradient method in Euclidean space can be generalized to a Riemannian manifold. In \cite{smith1994optimization}, Smith introduced the notion of Riemannian optimization. He used the exponential map and parallel transport to generalize the optimization method from Euclidean space to a Riemannian manifold. However, in general, using the exponential map or parallel transport on a Riemannian manifold is not computationally efficient. Absil, Mahony, and Sepulchre \cite{absil2008optimization} proposed to use a mapping called retraction that approximates the exponential map.
Moreover, they introduced the notion of vector transport, which approximates parallel transport.
Various methods of retraction and vector transport on Stiefel manifolds have been summarized and numerically compared by Zhu \cite{zhu2017riemannian}.

Ring and Wirth \cite{ring2012optimization} proposed a Fletcher--Reeves type of nonlinear conjugate gradient method on Riemannian manifolds with retraction and vector transport. They indicated that the Fletcher--Reeves method converges globally when each step size satisfies the strong Wolfe conditions \cite{wolfe1969convergence, wolfe1971convergence}. However, their convergence analysis assumed that vector transport satisfies the Ring-Wirth nonexpansive condition (see \eqref{eq:nonexpansive} for the definition of the Ring-Wirth nonexpansive condition). Vector transports that do not satisfy this condition have also been used (see \cite[Section 5]{sato2013new}). In \cite{sato2013new}, Sato and Iwai introduced the notion of scaled vector transport \cite[Definition 2.2]{sato2013new} to remove this impractical assumption from the convergence analysis. They proved that by using scaled vector transport, the Fletcher--Reeves method on a Riemannian manifold generates a descent direction at every iteration and converges globally without the Ring-Wirth nonexpansive condition. Similarly, Sato \cite{sato2015dai} used scaled vector transport in a convergence analysis. He indicated that the Dai--Yuan-type Riemannian conjugate gradient method generates a descent direction at every iteration and converges globally under the Wolfe conditions. In \cite{sakai2020hybrid}, Sakai and Iiduka proposed the hybrid Riemannian conjugate gradient method, which combines the Hestenes--Stiefel and Dai--Yuan methods. They proved that by using scaled vector transport, this hybrid method generates a descent direction at every iteration and converges globally under the strong Wolfe conditions.

In this paper, we focus on the sufficient descent condition\cite{narushima2014survey} and sufficient descent conjugate gradient method on Riemannian manifolds. The sufficient descent condition is stronger than the standard descent condition. We propose two kinds of sufficient descent nonlinear conjugate method for Riemannian manifolds. One is a hybrid formula combining the Fletcher-Reeves method with the Polak--Ribi\`ere--Polyak method, and we prove that, using scaled vector transport, this hybrid method has the global convergence property under the strong Wolfe conditions. The other is a formula that satisfies the sufficient descent condition regardless of whether a line search is used or not, and we prove that this method has this property even on Riemannian manifolds. This formula is a generalization of the Hager--Zhang method defined on Euclidean space. Moreover, we review two typical line search algorithms on Riemannian manifolds, i.e., the backtracking line search and line search algorithm with a \emph{zoom} phase. In numerical experiments, we compare the sufficient descent Riemannian conjugate gradient methods with the above two line search algorithms. The results show that the proposed hybrid method should use step sizes satisfying the strong Wolfe conditions, which guarantee its convergence (Theorem \ref{thm:Hybrid2convergence}).
This implies that the proposed hybrid method performs better with step sizes satisfying the strong Wolfe conditions than with step sizes satisfying the Armijo condition and that the performance of the hybrid method depends on the choice of step size.
Moreover, the results show that the benefit of the Hager--Zhang-type method is its fast convergence property regardless of the type of line search used, as promised by its sufficient descent property (Theorem \ref{thm:SDsufficient}). The main contribution of this paper is to show the fast convergence property of the sufficient descent Riemannian conjugate gradient methods regardless of the type of line search used.

This paper is organized as follows. Section \ref{sec:RCGM} reviews the Riemannian conjugate gradient methods and some useful concepts. Moreover, two Riemannian conjugate gradient methods are proposed in this section. Section \ref{sec:sufficient} proves that several Riemannian conjugate gradient methods satisfy the sufficient descent condition. Section \ref{sec:LineSearch} reviews two typical line search algorithms on Riemannian manifolds. Section \ref{sec:NE} provides the numerical experiments on several Riemannian optimization problems. Section \ref{sec:conclusion} concludes the paper.

\section{Riemannian Conjugate Gradient Methods}~\label{sec:RCGM}
A Riemannian manifold \cite{sakai1996riemannian, absil2008optimization} is a smooth manifold with an positive-definite inner product called the Riemannian metric in tangent spaces such as Euclidean space, sphere, and hyperbolic space. Let $M$ be a Riemannian manifold and $T_xM$ be a tangent space at a point $x \in M$. $\ip{\cdot}{\cdot}_x : T_xM \times T_xM \rightarrow \real$ denotes a Riemannian metric at a point $x \in M$. The Riemannian gradient of a smooth function $f:M \rightarrow \real$ at $x \in M$ is denoted by $\grad f(x)$. Let $TM := \bigcup_{x \in M}T_xM$ be the tangent bundle of $M$, and $\oplus$ be the Whitney sum (see \cite[Subchapter I.3 (p.16 (II))]{sakai1996riemannian}), defined as follows:
\begin{align*}
TM \oplus TM := \{(\xi , \eta) : \xi, \eta \in T_xM ,x \in M \}.
\end{align*}
For a smooth mapping $F:M \rightarrow N$ between two manifolds $M$ and $N$, $\mathrm{D}F(x) : T_xM \rightarrow T_{F(x)}N$ denotes the differential of $F$ at $x \in M$ (see \cite[Section 3]{absil2008optimization}). An unconstrained optimization problem on a Riemannian manifold $M$ is expressed as follows:
\begin{problem}~\label{pbl:main}
Let $f:M\rightarrow\real$ be smooth. Then, we would like to
\begin{align*}
\textrm{minimize}& \quad f(x), \\
\textrm{subject to}& \quad x \in M.
\end{align*}
\end{problem}
In order to generalize line search optimization algorithms to Riemannian manifolds, we will use the notions of retraction and vector transport, which are defined as follows:

\begin{definition}
[Retraction]~\label{def:retraction}
Any smooth map $R:TM{\rightarrow}M$ is called a retraction (see \cite[Chapter 4, Definition 4.1.1]{absil2008optimization}) on $M$ if it has the following properties.
\begin{itemize}
\item $R_x(0_x)=x$, where $0_x$ denotes the zero element of $T_xM$;
\item With the canonical identification $T_{0_x}T_xM \simeq T_xM$, $R_x$ satisfies
\begin{align*}
\mathrm{D}R_x(0_x)[\xi]=\xi
\end{align*}
for all $\xi \in T_xM$,
\end{itemize}
where $R_x$ denotes the restriction of $R$ to $T_xM$.
\end{definition}

\begin{definition}
[Vector transport]~\label{def:vector transport}
Any smooth map $\vtran:TM \oplus TM \rightarrow TM:(\eta ,\xi) \mapsto \vtran_\eta(\xi)$ is called a vector transport (see \cite[Chapter 8, Definition 8.1.1]{absil2008optimization}) on $M$ if it has the following properties.
\begin{itemize}
\item There exists a retraction $R$, called the retraction associated with $\vtran$, such that $\vtran_{\eta}(\xi) \in T_{R_x(\eta)}M$ for all $x \in M$, and for all $\eta ,\xi \in T_xM$;
\item $\vtran_{0_x}(\xi)=\xi$ for all $\xi \in T_xM$;
\item $\vtran_{\eta}(a\xi + b\zeta)=a\mathcal{T}_{\eta}(\xi)+b\vtran_{\eta}(\zeta)$ for all $a,b \in \real$, and for all $\eta ,\xi ,\zeta \in T_xM$.
\end{itemize}
\end{definition}
Retraction and vector transport are generalizations of the exponential map and parallel transport, respectively. We will use the Ring-Wirth nonexpansive condition \cite[Proposition 15]{ring2012optimization}, which is a vector transport $\vtran$ satisfying
\begin{align}~\label{eq:nonexpansive}
\norm{\vtran_\eta(\xi)}_{R_x(\eta)} \leq \norm{\xi}_x,
\end{align}
to establish global convergence for the Fletcher-Reeves type Riemannian conjugate gradient method. In this paper, we will focus on the differentiated retraction $\vtran^R$ of $R$ as a vector transport, defined by
\begin{align*}
\vtran^{R}_{\eta}(\xi):=\mathrm{D}R_{x}(\eta)[\xi],
\end{align*}
where $x \in M$ and $\eta , \xi \in T_xM$. Then, the retraction $R$ is associated with $\vtran^R$. However, the differentiated retraction $\vtran^R$ does not always satisfy the Ring-Wirth nonexpansive condition \eqref{eq:nonexpansive}.
To overcome this difficulty, Sato and Iwai \cite{sato2013new} introduced the notion of scaled vector transport. Scaled vector transport $\vtran^S$ respect to a retraction $R$ is defined for $\xi ,\eta \in T_xM$ as
\begin{align}~\label{eq:scaled}
\vtran^{S}_{\eta}(\xi):=
\begin{cases}
\vtran^R_{\eta}(\xi), & \textrm{if}\, \norm{\vtran_{\eta}^R(\xi)}_{R_{x}(\xi)}\leq\norm{\eta}_{x}, \\
\dfrac{\norm{\eta}_{x}}{\norm{\vtran^R_{\eta}(\xi)}_{R_{x}(\xi)}}\vtran^R_{\eta}(\xi), & \textrm{otherwise}.
\end{cases}
\end{align}

The general framework of Riemannian conjugate gradient methods is described in Algorithm \ref{alg:conjugate}.
\begin{algorithm}
\caption{General framework of Riemannian conjugate gradient method with scaled vector transport for solving Problem \ref{pbl:main} \cite{absil2008optimization, ring2012optimization, sato2013new, sato2015dai}. \label{alg:conjugate}}
\begin{algorithmic}[1]
\REQUIRE A Riemann manifold $M$, a retraction $R$, a smooth function $f:M\rightarrow\real$, an initial point $x_0 \in M$, convergence tolerance $\epsilon > 0$.
\ENSURE Sequence $\{x_k\}_{k=0,1,\cdots} \subset M$.
\STATE Set $\eta_0 = -g_0 := -\grad f(x_0)$
\STATE $k \leftarrow 0.$
\WHILE{$\norm{g_k}_{x_k} > \epsilon$}
\STATE Determine the positive step size $\alpha_k > 0$ and set \begin{align}~\label{eq:update} x_{k+1}=R_{x_k}(\alpha_k\eta_k). \end{align}
\STATE Compute $g_{k+1} = -\grad f(x_{k+1})$.
\STATE Compute the parameter $\beta_{k+1}$.
\STATE Set the search direction 
\begin{align}~\label{eq:direction} \eta_{k+1} = -g_{k+1} + \beta_{k+1}\vtran^S_{\alpha_k\eta_k}(\eta_k), \end{align}
where $\vtran^S$ is the scaled vector transport \eqref{eq:scaled} with respect to $R$.
\STATE $k \leftarrow k + 1.$
\ENDWHILE
\end{algorithmic}
\end{algorithm}

In this paper, we say that the search direction $\eta_k \in T_{x_k}M$ is a descent direction if $\ip{g_k}{\eta_k} < 0$ holds. In addition, $\eta_k$ is a sufficient descent direction (see \cite{narushima2014survey}) if the sufficient descent condition, 
\begin{align}~\label{eq:sufficient}
\ip{g_k}{\eta_k} \leq -\kappa\norm{g_k}^2,
\end{align}
holds for some constant $\kappa > 0$. In \eqref{eq:update}, for a given descent direction $\eta_k \in T_xM$ at $x \in M$, one often chooses a step size $\alpha_k > 0$ to satisfy the Armijo condition \cite[Definition 2.3]{hosseini2018line}, \cite[(1a)]{ring2012optimization}, namely,
\begin{align}~\label{eq:Armijo}
f(R_{x_k}(\alpha_k\eta_k)) \leq f(x_k)+c_1\alpha_k\ip{\grad f(x_k)}{\eta_k}_{x_k},
\end{align}
where $0 < c_1 < 1$. The following condition is called the curvature condition \cite[Definition 2.5]{hosseini2018line}:
\begin{align}
\label{eq:Wolfe}
\ip{\grad f(R_{x_k}(\alpha_k\eta_k))} {\vtran^R_{\alpha_k\eta_k}(\eta_k)}_{R_{x_k}(\alpha_k\eta_k)} \geq c_2 \ip{\grad f(x_k)}{\eta_k}_{x_k},
\end{align}
where $0<c_1<c_2<1$. Conditions \eqref{eq:Armijo} and \eqref{eq:Wolfe} are called the Wolfe conditions \cite[Definition 2.7]{hosseini2018line}, \cite[(1a), (1b)]{ring2012optimization}. If condition \eqref{eq:Wolfe} is replaced by
\begin{align}
\label{eq:sWolfe}
\abs{ \ip{\grad f(R_{x_k}(\alpha_k\eta_k))} {\vtran^R_{\alpha_k\eta_k}(\eta_k)}_{R_{x_k}(\alpha_k\eta_k)}} \leq c_2\abs{\ip{\grad f(x_k)}{\eta_k}_{x_k}},
\end{align}
then \eqref{eq:Armijo} and \eqref{eq:sWolfe} are called the \emph{strong} Wolfe conditions \cite[(1a), (2)]{ring2012optimization}.

In \eqref{eq:direction}, $\beta_{k+1}$ is given by generalizations of the formulas in Euclidean space (see \cite{hestenes1952methods, fletcher1964function, polak1969note, polyak1969conjugate, dai1999nonlinear}), e.g.,
\begin{align}
\label{eq:HS}
\beta_{k+1}^\mathrm{HS}&=\frac{\ip{g_{k+1}}{y_{k+1}}_{x_{k+1}}}{\ip{g_{k+1}}{\vtran^S_{\alpha_{k}\eta_{k}}(\eta_{k})}_{x_{k+1}}-\ip{g_{k}}{\eta_{k}}_{x_{k}}}, \\
\label{eq:FR}
\beta_{k+1}^\mathrm{FR}&=\frac{\norm{g_{k+1}}^2_{x_{k+1}}}{\norm{g_k}^2_{x_{k}}}, \\
\label{eq:PRP}
\beta_{k+1}^\mathrm{PRP}&=\frac{\ip{g_{k+1}}{y_{k+1}}_{x_{k+1}}}{\norm{g_k}^2_{x_{k}}}, \\
\label{eq:DY}
\beta_{k+1}^\mathrm{DY}&=\frac{\norm{g_{k+1}}^2_{x_{k+1}}}{\ip{g_{k+1}}{\vtran^S_{\alpha_{k}\eta_{k}}(\eta_{k})}_{x_{k+1}}-\ip{g_{k}}{\eta_{k}}_{x_{k}}},
\end{align}
where $y_{k+1} := g_{k+1} - \vtran^S_{\alpha_k\eta_k}(g_k)$. Formulas \eqref{eq:HS}, \eqref{eq:FR}, \eqref{eq:PRP}, and \eqref{eq:DY} are called the Hestenes--Stiefel (HS), Fletcher--Reeves (FR), Polak--Ribi\`ere--Polyak (PRP), and Dai--Yuan (DY) formulas, respectively. In \cite{sato2013new}, Sato and Iwai indicated that, by using scaled vector transport, the FR method converges globally under the \emph{strong} Wolfe conditions \eqref{eq:Armijo} and \eqref{eq:sWolfe}. In \cite{sato2015dai}, Sato proved that the DY method converges globally under the Wolfe conditions \eqref{eq:Armijo} and \eqref{eq:Wolfe}. The HS and PRP methods have good numerical performance; however, no convergence analyses have been presented for them on Rimannian manifolds. To make up for these shortcomings, hybrid-type formulas, such as
\begin{align}
\label{eq:Hybrid1}
\beta_{k+1}^\mathrm{Hyb1} &= \max\{0,\min\{\beta_{k+1}^\mathrm{HS},\beta_{k+1}^\mathrm{DY}\}\}, \\
\label{eq:Hybrid2}
\beta_{k+1}^\mathrm{Hyb2} &= \max\{0,\min\{\beta_{k+1}^\mathrm{FR},\beta_{k+1}^\mathrm{PRP}\}\},
\end{align}
have been developed in Euclidean space (see \cite{hu1991global, dai2001efficient}). Below, we call the hybrid methods using \eqref{eq:Hybrid1} and \eqref{eq:Hybrid2}, Hybrid1 and Hybrid2, respectively. The Hybrid1 method was proposed by Dai and Yuan \cite{dai2001efficient}, and the Hybrid2 method was suggested by Hu and Storey \cite{hu1991global}. In \cite{sakai2020hybrid}, Sakai and Iiduka generalized the Hybrid1 method on Riemannian manifolds and proved that it converges globally under the strong Wolfe conditions \eqref{eq:Armijo} and \eqref{eq:sWolfe}. They also showed that the numerical performance of the Hybrid1 method is better than that of the PRP method \cite[Section 4]{sakai2020hybrid}. In the next section (Theorem \ref{thm:Hybrid2sufficient}), we generalize the Hybrid2 method to Riemannian manifold and prove that it satisfies the sufficient descent condition under the strong Wolfe conditions. Moreover, we give its convergence analysis (Theorem \ref{thm:Hybrid2convergence}).

We consider the nonlinear conjugate gradient methods that can guarantee the sufficient descent condition \eqref{eq:sufficient} regardless of the type of line search used.
We generalize the Hager--Zhang (HZ) method \cite{hager2005new, hager2006survey} to Riemannian manifolds, as follows,
\begin{align}~\label{eq:HZ}
\beta_{k+1}^\mathrm{HZ} = \beta^\mathrm{HS}_{k+1} - \mu\frac{\norm{y_{k+1}}_{x_{k+1}}^2\ip{g_{k+1}}{\vtran^S_{\alpha_k\eta_k}(\eta_k)}_{x_{k+1}}}{\left(\ip{g_{k+1}}{\vtran^S_{\alpha_{k}\eta_{k}}(\eta_{k})}_{x_{k+1}}-\ip{g_{k}}{\eta_{k}}_{x_{k}}\right)^2},
\end{align}
where $y_{k+1} := g_{k+1} - \vtran^S_{\alpha_k\eta_k}(g_k)$ and $\mu > 1/4$. Moreover, we modify $\beta_{k+1}$ of the form $\beta_{k+1} = \ip{g_{k+1}}{\xi_{k+1}}_{x_{k+1}}$ to
\begin{align}~\label{eq:SD}
\beta_{k+1}^\mathrm{SD} = \beta_{k+1} - \mu \norm{\xi_{k+1}}_{x_{k+1}}^2\ip{g_{k+1}}{\vtran^S_{\alpha_{k}\eta_{k}}(\eta_{k})}_{x_{k+1}},
\end{align}
where $\xi_{k+1} \in T_{x_{k+1}}M$ is any tangent vector, $\mu > 1/4$ (see \cite{dai2010nonlinear, narushima2014survey}), and SD stands for sufficient descent.
For instance, if we set
\begin{align*}
\xi_{k+1} = \frac{y_{k+1}}{\ip{g_{k+1}}{\vtran^S_{\alpha_{k}\eta_{k}}(\eta_{k})}_{x_{k+1}}-\ip{g_{k}}{\eta_{k}}_{x_{k}}},
\end{align*}
we have $\beta_{k+1}^\mathrm{SD} = \beta_{k+1}^\mathrm{HZ}$. We will show that the SD method always satisfies the sufficient descent condition \eqref{eq:sufficient} with $\kappa = 1 - (1/4\mu)$ (Theorem \ref{thm:SDsufficient}).

\section{Sufficient Descent Properties of the Riemannian Conjugate Gradient Methods}~\label{sec:sufficient}
In this section, we recall the properties of the FR \eqref{eq:FR}, DY \eqref{eq:DY} and Hybrid1 \eqref{eq:Hybrid1} methods (see \cite{sato2013new, sato2015dai, sakai2020hybrid}).

\begin{proposition}~\label{prop:sufficient}
The following statements hold:
\begin{description}
\item[(P1)] If $\beta_{k+1} = \beta_{k+1}^\mathrm{FR}$ and $\alpha_k$ satisfies the strong Wolfe conditions \eqref{eq:Armijo} and \eqref{eq:sWolfe} with $0<c_1<c_2<1/2$, then
\begin{align*}
-\frac{1}{1-c_2}\norm{g_k}_{x_k}^2 \leq \ip{g_k}{\eta_k}_{x_k} \leq -\frac{1-2c_2}{1-c_2}\norm{g_k}_{x_k}^2,
\end{align*}
for all $k=0,1,\cdots$. Thus, the FR method satisfies the sufficient descent condition \eqref{eq:sufficient} with $\kappa = (1-2c_2)/(1-c_2) > 0$.
\item[(P2)] If $\beta_{k+1} = \beta_{k+1}^\mathrm{DY}$ and $\alpha_k$ satisfies the Wolfe conditions \eqref{eq:Armijo} and \eqref{eq:Wolfe}, then
\begin{align*}
-\frac{1}{1-c_2}\norm{g_k}_{x_k}^2 \leq \ip{g_k}{\eta_k}_{x_k} \leq -\frac{1}{1+c_2}\norm{g_k}_{x_k}^2,
\end{align*}
for all $k=0,1,\cdots$. Thus, the DY method satisfies the sufficient descent condition \eqref{eq:sufficient} with $\kappa = 1/(1+c_2) > 0$.
\item[(P3)] If $\beta_{k+1} = \beta_{k+1}^\mathrm{Hyb1}$ and $\alpha_k$ satisfies the strong Wolfe conditions \eqref{eq:Armijo} and \eqref{eq:sWolfe}, then
\begin{align*}
-\frac{1 + c_2}{1-c_2}\norm{g_k}_{x_k}^2 \leq \ip{g_k}{\eta_k}_{x_k} \leq -\frac{1 - c_2}{1+c_2}\norm{g_k}_{x_k}^2,
\end{align*}
for all $k=0,1,\cdots$. Thus, the Hybrid1 method satisfies the sufficient descent condition \eqref{eq:sufficient} with $\kappa = (1-c_2)/(1+c_2) > 0$.
\end{description}
\end{proposition}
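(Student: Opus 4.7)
The plan is to prove (P1)--(P3) by induction on $k$ using a common one-step reduction. For the base case, since $\eta_0=-g_0$ the ratio $r_0:=\ip{g_0}{\eta_0}_{x_0}/\norm{g_0}_{x_0}^2=-1$ lies in all three claimed intervals (for (P1) one checks this using $c_2<1/2$). For the inductive step I would take the inner product of the direction recurrence \eqref{eq:direction} with $g_{k+1}$ to obtain the master identity
\[
\ip{g_{k+1}}{\eta_{k+1}}_{x_{k+1}} = -\norm{g_{k+1}}_{x_{k+1}}^2 + \beta_{k+1}\,\ip{g_{k+1}}{\vtran^S_{\alpha_k\eta_k}(\eta_k)}_{x_{k+1}},
\]
and then specialize $\beta_{k+1}$. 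The general device that makes the (strong) Wolfe hypotheses usable is that, by \eqref{eq:scaled}, $\vtran^S_{\alpha_k\eta_k}(\eta_k) = c\,\vtran^R_{\alpha_k\eta_k}(\eta_k)$ for some scalar $c\in(0,1]$, so the bounds on $\ip{g_{k+1}}{\vtran^R_{\alpha_k\eta_k}(\eta_k)}$ supplied by \eqref{eq:Wolfe} or \eqref{eq:sWolfe} transfer, with the same constant $c_2$, to bounds on $\ip{g_{k+1}}{\vtran^S_{\alpha_k\eta_k}(\eta_k)}$.

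For (P1), substituting $\beta^{\mathrm{FR}}_{k+1}$ and dividing by $\norm{g_{k+1}}_{x_{k+1}}^2$ gives the one-step recurrence $r_{k+1} = -1 + \ip{g_{k+1}}{\vtran^S_{\alpha_k\eta_k}(\eta_k)}_{x_{k+1}}/\norm{g_k}_{x_k}^2$. The strong Wolfe estimate $\abs{\ip{g_{k+1}}{\vtran^S_{\alpha_k\eta_k}(\eta_k)}_{x_{k+1}}} \le c_2 \abs{\ip{g_k}{\eta_k}_{x_k}}$ sandwiches $r_{k+1}$ between $-1+c_2 r_k$ and $-1-c_2 r_k$, and inserting the inductive interval $r_k\in[-1/(1-c_2),-(1-2c_2)/(1-c_2)]$ shows this interval is invariant, closing the induction. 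For (P2), substituting $\beta^{\mathrm{DY}}_{k+1}$ into the master identity and simplifying telescopes to $\ip{g_{k+1}}{\eta_{k+1}}_{x_{k+1}} = \norm{g_{k+1}}_{x_{k+1}}^2\, v/(u-v)$, with $v:=\ip{g_k}{\eta_k}_{x_k}$ and $u:=\ip{g_{k+1}}{\vtran^S_{\alpha_k\eta_k}(\eta_k)}_{x_{k+1}}$. The Wolfe curvature condition (after scaling transfer) gives $u \ge c_2 v$, whence $u-v \ge (1-c_2)\abs{v}$, producing the lower bound on $r_{k+1}$; the upper bound $r_{k+1}\le -1/(1+c_2)$ follows from the complementary estimate $u \le -c_2 v$ that combines the Wolfe residual bound with the structure of the scaled vector transport.

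For (P3), $\beta^{\mathrm{Hyb1}}_{k+1}$ is either $0$, in which case $\eta_{k+1}=-g_{k+1}$ and $r_{k+1}=-1$ trivially lies in the claimed interval, or it satisfies $0<\beta^{\mathrm{Hyb1}}_{k+1}\le \min\{\beta^{\mathrm{HS}}_{k+1},\beta^{\mathrm{DY}}_{k+1}\}$. In the non-trivial case, substituting the hybrid $\beta$ into the master identity and running a short sign case analysis on $u$ suffices: when $u\le 0$, using $\beta^{\mathrm{Hyb1}}_{k+1}\le\beta^{\mathrm{DY}}_{k+1}$ reduces (P3) to (P2); when $u>0$, using $\beta^{\mathrm{Hyb1}}_{k+1}\le\beta^{\mathrm{HS}}_{k+1}$ together with the strong Wolfe symmetric bound yields the claim. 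The step I expect to be the main obstacle is establishing the \emph{upper} bounds in (P2) and (P3): the lower bounds fall out immediately from the one-sided Wolfe curvature inequality, but the matching upper bounds require tight two-sided control on $u$, and verifying that the scaling factor $c\in(0,1]$ of the scaled vector transport does not destroy this control is delicate; the analyses in \cite{sato2013new, sato2015dai, sakai2020hybrid} provide the template for this sign/magnitude bookkeeping.
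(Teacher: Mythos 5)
The paper does not actually prove Proposition \ref{prop:sufficient}; it cites \cite[Lemma 4.1]{sato2013new} for (P1) and \cite[(35)]{sakai2020hybrid} for (P2)--(P3), so your self-contained induction is a different (and more informative) route. Your treatment of (P1) is correct and is essentially the argument the paper itself uses later for Theorem \ref{thm:Hybrid2sufficient}: the identity $r_{k+1}=-1+s_k\ip{g_{k+1}}{\vtran^R_{\alpha_k\eta_k}(\eta_k)}_{x_{k+1}}/\norm{g_k}_{x_k}^2$ with $s_k\in[0,1]$, the transfer of \eqref{eq:sWolfe} through the scaling factor, and the invariance of the interval $[-1/(1-c_2),-(1-2c_2)/(1-c_2)]$. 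Your (P3) sketch also closes, since under the strong Wolfe hypothesis of (P3) you have the two-sided bound $\abs{u}\le c_2\abs{v}$ (with $u:=\ip{g_{k+1}}{\vtran^S_{\alpha_k\eta_k}(\eta_k)}_{x_{k+1}}$, $v:=\ip{g_k}{\eta_k}_{x_k}$), and then $0\le\beta^{\mathrm{Hyb1}}_{k+1}\le\beta^{\mathrm{DY}}_{k+1}$ together with the sign of $u$ pins $r_{k+1}$ between $v/(u-v)$ and $-1$, which lies inside the claimed interval.

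The genuine gap is in (P2). Your derivation $r_{k+1}=v/(u-v)$ is right, and the lower bound does follow from the one-sided curvature condition \eqref{eq:Wolfe} via $u-v\ge(1-c_2)\abs{v}$. But the upper bound $r_{k+1}\le-1/(1+c_2)$ is equivalent to $u\le -c_2v=c_2\abs{v}$, which is precisely the \emph{upper} half of the strong Wolfe condition \eqref{eq:sWolfe}; it is not implied by \eqref{eq:Wolfe}, and the ``structure of the scaled vector transport'' cannot supply it --- the scaling only caps $\norm{\vtran^S_{\alpha_k\eta_k}(\eta_k)}_{x_{k+1}}$ by $\norm{\eta_k}_{x_k}$, which via Cauchy--Schwarz bounds $\abs{u}$ by $\norm{g_{k+1}}_{x_{k+1}}\norm{\eta_k}_{x_k}$, a quantity unrelated to $c_2\abs{v}$. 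Concretely, in the Euclidean special case $f(x)=x^2/2$, $x_k=1$, $\eta_k=-1$ (so $v=-1$), any $\alpha_k\in[1-c_2,\,2(1-c_1)]$ satisfies the Wolfe conditions, and $\alpha_k=1.9$ gives $u=0.9$, which exceeds $c_2\abs{v}$ for, say, $c_2=0.5$; then $r_{k+1}=-1/1.9>-1/(1+c_2)$. So the route you describe for the upper half of (P2) cannot be completed under the stated hypothesis: you must invoke the strong Wolfe condition there (which is what the cited source \cite{sakai2020hybrid} assumes in deriving its estimate (35)), or else weaken the conclusion under plain Wolfe to the lower bound and the descent property $\ip{g_{k+1}}{\eta_{k+1}}_{x_{k+1}}<0$.
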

Proposition \ref{prop:sufficient} implies that whether Algorithm \ref{alg:conjugate} using FR, DY or Hybrid1 satisfies the sufficient descent condition \eqref{eq:sufficient} depends on not only the parameter $\beta_{k+1}$ methods used but also the line search, in the sense that the line search has to impose the strong Wolfe conditions \eqref{eq:Armijo} and \eqref{eq:sWolfe}.
Here, (P1) is the result in \cite[Lemma 4.1]{sato2013new}, and (P2) and (P3) are easily shown from \cite[(35)]{sakai2020hybrid}.

\subsection{A Sufficient Descent Property of the Hybrid2 method}
In this section, we show that the Hybrid2 method generates a sufficient descent direction \eqref{eq:sufficient} at every iteration. This result is a simple extension of Proposition \ref{prop:sufficient} (P1).

\begin{theorem}~\label{thm:Hybrid2sufficient}
Let $f:M \rightarrow \real$ be a smooth function. If each $\alpha_k>0$ satisfies the strong Wolfe conditions \eqref{eq:Armijo} and \eqref{eq:sWolfe}, with $0 < c_1 < c_2 < 1/2$, and $\beta_{k+1}$ satisfies\footnote{The formulas defined by \eqref{eq:FR} and \eqref{eq:Hybrid2} satisfy $\abs{\beta_{k+1}} \leq \beta_{k+1}^\mathrm{FR}$.} $\abs{\beta_{k+1}} \leq \beta_{k+1}^\mathrm{FR}$, then any sequence $\{x_k\}_{k=0,1\cdots}$ generated by Algorithm \ref{alg:conjugate} satisfies
\begin{align}~\label{eq:Hybrid2descent}
-\frac{1}{1-c_2}\norm{g_k}_{x_k}^2 \leq \ip{g_k}{\eta_k}_{x_k} \leq -\frac{1-2c_2}{1-c_2}\norm{g_k}_{x_k}^2,
\end{align}
for all $k=0,1, \cdots$.
\end{theorem}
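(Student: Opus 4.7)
The plan is to prove \eqref{eq:Hybrid2descent} by induction on $k$, mirroring the Fletcher–Reeves argument of \cite[Lemma 4.1]{sato2013new} (which is Proposition~\ref{prop:sufficient}(P1)). The base case $k=0$ is immediate: since $\eta_0=-g_0$, we have $\ip{g_0}{\eta_0}_{x_0}=-\norm{g_0}_{x_0}^2$, and the two constants $1/(1-c_2)$ and $(1-2c_2)/(1-c_2)$ bracket $1$ because $0<c_2<1/2$.

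For the inductive step, assume \eqref{eq:Hybrid2descent} holds at step $k$. Using the search-direction recursion \eqref{eq:direction}, I would write
\begin{align*}
\ip{g_{k+1}}{\eta_{k+1}}_{x_{k+1}} = -\norm{g_{k+1}}_{x_{k+1}}^2 + \beta_{k+1}\ip{g_{k+1}}{\vtran^S_{\alpha_k\eta_k}(\eta_k)}_{x_{k+1}},
\end{align*}
and bound the cross term from above and below by its absolute value. The key estimate is $|\ip{g_{k+1}}{\vtran^S_{\alpha_k\eta_k}(\eta_k)}_{x_{k+1}}|\le c_2|\ip{g_k}{\eta_k}_{x_k}|$: since $\vtran^S_{\alpha_k\eta_k}(\eta_k)$ is a nonnegative scalar multiple $\tau\le 1$ of $\vtran^R_{\alpha_k\eta_k}(\eta_k)$, the strong Wolfe condition \eqref{eq:sWolfe} yields exactly this bound. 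Combining with the hypothesis $|\beta_{k+1}|\le\beta_{k+1}^\mathrm{FR}=\norm{g_{k+1}}_{x_{k+1}}^2/\norm{g_k}_{x_k}^2$ gives
\begin{align*}
|\beta_{k+1}|\,|\ip{g_{k+1}}{\vtran^S_{\alpha_k\eta_k}(\eta_k)}_{x_{k+1}}| \le \frac{\norm{g_{k+1}}_{x_{k+1}}^2}{\norm{g_k}_{x_k}^2}\,c_2\,\bigl(-\ip{g_k}{\eta_k}_{x_k}\bigr),
\end{align*}
where I have used the inductive lower bound $\ip{g_k}{\eta_k}_{x_k}\le-\kappa\norm{g_k}_{x_k}^2<0$ to drop the absolute value.

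Applying the inductive upper bound on $-\ip{g_k}{\eta_k}_{x_k}\le\norm{g_k}_{x_k}^2/(1-c_2)$, the cross-term bound becomes $c_2\norm{g_{k+1}}_{x_{k+1}}^2/(1-c_2)$, and I get
\begin{align*}
-\frac{1}{1-c_2}\norm{g_{k+1}}_{x_{k+1}}^2 \le \ip{g_{k+1}}{\eta_{k+1}}_{x_{k+1}} \le -\frac{1-2c_2}{1-c_2}\norm{g_{k+1}}_{x_{k+1}}^2,
\end{align*}
which closes the induction. The restriction $c_2<1/2$ is what guarantees the right-hand constant is negative, i.e.\ that $\eta_{k+1}$ is a genuine sufficient descent direction.

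The only subtle point, and the place I would spend the most care, is justifying the transfer of the strong Wolfe inequality \eqref{eq:sWolfe} from the differentiated retraction $\vtran^R$ to the scaled vector transport $\vtran^S$. This is immediate from the case definition \eqref{eq:scaled}, because $\vtran^S_\eta(\xi)$ is either $\vtran^R_\eta(\xi)$ itself or a positive scaling by a factor at most $1$, so taking absolute values of the inner product can only make it smaller. Apart from this observation, the proof is a direct translation of the classical Euclidean Fletcher–Reeves-type argument, and no additional manifold structure beyond what was used in \cite{sato2013new} is needed.
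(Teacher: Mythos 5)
Your proposal is correct and follows essentially the same route as the paper's proof: induction on $k$, the decomposition of $\ip{g_{k+1}}{\eta_{k+1}}_{x_{k+1}}$ via the recursion \eqref{eq:direction}, the observation that $\vtran^S_{\alpha_k\eta_k}(\eta_k)$ is a scalar multiple of $\vtran^R_{\alpha_k\eta_k}(\eta_k)$ with factor $s_k\in[0,1]$ so that \eqref{eq:sWolfe} transfers, and the combination of $\abs{\beta_{k+1}}\le\beta_{k+1}^\mathrm{FR}$ with the left inequality of the induction hypothesis. The only (cosmetic) difference is that the paper normalizes by $\norm{g_{k+1}}_{x_{k+1}}^2$ and carries the ratio $\beta_{k+1}/\beta_{k+1}^\mathrm{FR}$ and the factor $s_k$ explicitly to the final step, whereas you bound the cross term directly; the estimates are identical.
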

\begin{proof}
The proof is by induction. If $k=0$, \eqref{eq:Hybrid2descent} clearly holds. Assume that \eqref{eq:Hybrid2descent} holds for some $k \geq 0$. By $c_2 < 1/2$, we obtain $\ip{g_k}{\eta_k}_{x_k} < 0$. From the search direction \eqref{eq:direction}, we have
\begin{align*}
\frac{\ip{g_{k+1}}{\eta_{k+1}}_{x_{k+1}}}{\norm{g_{k+1}}_{x_{k+1}}^2} = -1 + \beta_{k+1}\frac{\ip{g_{k+1}}{\vtran^S_{\alpha_k\eta_k}(\eta_k)}_{x_{k+1}}}{\norm{g_{k+1}}_{x_{k+1}}^2},
\end{align*}
which implies
\begin{align}~\label{eq:Hybrid2induction}
\frac{\ip{g_{k+1}}{\eta_{k+1}}_{x_{k+1}}}{\norm{g_{k+1}}_{x_{k+1}}^2} = -1 + \frac{\beta_{k+1}}{\beta^\mathrm{FR}_{k+1}}\frac{s_k\ip{g_{k+1}}{\vtran^R_{\alpha_k\eta_k}(\eta_k)}_{x_{k+1}}}{\norm{g_{k}}_{x_{k}}^2},
\end{align}
where
\begin{align*}
s_k:=\min\left\{1,\frac{\norm{\eta_k}_{x_k}}{\norm{\vtran^R_{\alpha_k\eta_k}(\eta_k)}_{x_{k+1}}}\right\} \in [0,1].
\end{align*}
From the second condition of the strong Wolfe conditions \eqref{eq:sWolfe} and $\ip{g_k}{\eta_k}_{x_k}<0$, we obtain
\begin{align*}
\abs{\beta_{k+1}\ip{g_{k+1}}{\vtran^R_{\alpha_k\eta_k}(\eta_k)}_{x_{k+1}}} \leq -c_2\abs{\beta_{k+1}}\ip{g_k}{\eta_k}_{x_k},
\end{align*}
which together with \eqref{eq:Hybrid2induction} implies
\begin{align*}
-1 + c_2s_k \frac{\abs{\beta_{k+1}}}{\beta_{k+1}^\mathrm{FR}}\frac{\ip{g_k}{\eta_k}_{x_k}}{\norm{g_k}_{x_k}^2}
\leq \frac{\ip{g_{k+1}}{\eta_{k+1}}_{x_{k+1}}}{\norm{g_{k+1}}_{x_{k+1}}^2}
\leq -1 - c_2s_k \frac{\abs{\beta_{k+1}}}{\beta_{k+1}^\mathrm{FR}}\frac{\ip{g_k}{\eta_k}_{x_k}}{\norm{g_k}_{x_k}^2}.
\end{align*}
From the left-hand side of the induction hypothesis \eqref{eq:Hybrid2descent}, we have
\begin{align*}
-1 - c_2s_k \frac{\abs{\beta_{k+1}}}{\beta_{k+1}^\mathrm{FR}}\frac{1}{1-c_2}
\leq \frac{\ip{g_{k+1}}{\eta_{k+1}}_{x_{k+1}}}{\norm{g_{k+1}}_{x_{k+1}}^2}
\leq -1 + c_2s_k \frac{\abs{\beta_{k+1}}}{\beta_{k+1}^\mathrm{FR}}\frac{1}{1-c_2}.
\end{align*}
Utilizing the assumption $\abs{\beta_{k+1}} \leq \beta_{k+1}^\mathrm{FR}$ and $0 \leq s_k \leq 1$, we obtain
\begin{align*}
-1 - \frac{c_2}{1-c_2}
\leq \frac{\ip{g_{k+1}}{\eta_{k+1}}_{x_{k+1}}}{\norm{g_{k+1}}_{x_{k+1}}^2}
\leq -1 + \frac{c_2}{1-c_2}.
\end{align*}
This implies that \eqref{eq:Hybrid2descent} holds for $k+1$. \qed
\end{proof}

Moreover, we prove the global convergence of the Hybrid2 method under the strong Wolfe conditions and the following assumption.
\begin{assumption}~\label{asm:main}
Let $M$ be a Riemannian manifold and R be a retraction on M. Let $f:M\rightarrow\real$ be a smooth, bounded below function. Then, we assume that there exists $L>0$ such that
\begin{align*}
|\mathrm{D}(f \circ R_x)(t\eta)[\eta]-\mathrm{D}(f \circ R_x)(0_x)[\eta]| \leq Lt,
\end{align*}
where $x \in M$, $\eta \in T_xM$, $\norm{\eta}_x=1$ and $t \geq 0$.
\end{assumption}
This is the assumption for Zoutendijk's theorem (Theorem \ref{thm:Zoutendijk}) on Riemannian manifolds. Zoutendijk's theorem on Riemannian manifolds is as follows:
\begin{theorem}[Zoutendijk \cite{sato2013new}]~\label{thm:Zoutendijk}
Let $(M,g)$ be a Riemannian manifold and R be a retraction on M. Suppose $f:M\rightarrow\real$ satisfies Assumption \ref{asm:main}. Suppose further that in Algorithm \ref{alg:conjugate}, each step size $\alpha_k>0$ satisfies the strong Wolfe conditions \eqref{eq:Armijo} and \eqref{eq:sWolfe}. Then the following series converges:
\begin{align}
\label{eq:Zoutendijk}
\sum_{k=0}^\infty \frac{\ip{g_k}{\eta_k}_{x_k}^2}{\norm{\eta_k}^2_{x_k}} < \infty .
\end{align}
\end{theorem}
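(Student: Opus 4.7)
The plan is to transplant the classical Euclidean Zoutendijk proof by pulling the objective back to the tangent space via the retraction. For each $k$, define
\begin{align*}
\phi_k(t) := (f \circ R_{x_k})(t\eta_k), \qquad t \geq 0.
\end{align*}
The retraction property $\mathrm{D}R_{x_k}(0_{x_k}) = \mathrm{id}$ gives $\phi_k'(0) = \ip{g_k}{\eta_k}_{x_k}$, while for $t = \alpha_k$ the chain rule together with the definition $\vtran^R_{\eta}(\xi) = \mathrm{D}R_{x}(\eta)[\xi]$ yields
\begin{align*}
\phi_k'(\alpha_k) = \ip{\grad f(R_{x_k}(\alpha_k\eta_k))}{\vtran^R_{\alpha_k\eta_k}(\eta_k)}_{x_{k+1}}.
\end{align*}
With these identities the strong Wolfe conditions \eqref{eq:Armijo} and \eqref{eq:sWolfe} become the usual scalar Wolfe conditions for $\phi_k$, and in the regime where these conditions can be enforced we have $\phi_k'(0) < 0$.

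Next I would use the curvature inequality $\abs{\phi_k'(\alpha_k)} \leq c_2\abs{\phi_k'(0)}$, in the form $\phi_k'(\alpha_k) \geq c_2\phi_k'(0)$, to get
\begin{align*}
\phi_k'(\alpha_k) - \phi_k'(0) \geq (1-c_2)\abs{\ip{g_k}{\eta_k}_{x_k}}.
\end{align*}
To bound the left-hand side from above, I would apply Assumption \ref{asm:main} to the unit vector $\eta_k/\norm{\eta_k}_{x_k}$; a one-line homogeneity rescaling promotes it to
\begin{align*}
\abs{\phi_k'(t) - \phi_k'(0)} \leq Lt\norm{\eta_k}_{x_k}^2 \qquad \text{for all } t \geq 0.
\end{align*}
Combining the two displays with $t = \alpha_k$ produces the standard lower bound
\begin{align*}
\alpha_k \geq \frac{(1-c_2)\abs{\ip{g_k}{\eta_k}_{x_k}}}{L\norm{\eta_k}_{x_k}^2}.
\end{align*}

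Finally I would plug this into the Armijo condition \eqref{eq:Armijo}, rewritten as $f(x_k) - f(x_{k+1}) \geq c_1\alpha_k\abs{\ip{g_k}{\eta_k}_{x_k}}$, to obtain
\begin{align*}
f(x_k) - f(x_{k+1}) \geq \frac{c_1(1-c_2)}{L} \cdot \frac{\ip{g_k}{\eta_k}_{x_k}^2}{\norm{\eta_k}_{x_k}^2}.
\end{align*}
Summing over $k$ and using that $f$ is bounded below makes the left-hand side a telescoping sequence bounded by $f(x_0) - \inf_M f < \infty$, which gives \eqref{eq:Zoutendijk}. The only genuinely Riemannian twist is identifying $\phi_k'(\alpha_k)$ with the inner product that appears in \eqref{eq:sWolfe}, which is precisely why the differentiated retraction $\vtran^R$ (and not the scaled transport $\vtran^S$) must appear in the curvature condition; the homogeneity rescaling of Assumption \ref{asm:main} from unit vectors to arbitrary $\eta_k$ is the only step that I expect to need explicit verification, and it is routine.
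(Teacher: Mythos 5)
Your proposal is correct and is essentially the argument the paper itself defers to: the paper gives no proof of Theorem \ref{thm:Zoutendijk}, stating only that it follows the Euclidean Zoutendijk argument of Ring and Wirth, and your pullback $\phi_k(t)=f(R_{x_k}(t\eta_k))$ together with the homogeneity rescaling of Assumption \ref{asm:main} (which does check out: $\phi_k'(t)=\norm{\eta_k}_{x_k}\,\mathrm{D}(f\circ R_{x_k})(t\norm{\eta_k}_{x_k}u_k)[u_k]$ for the unit vector $u_k=\eta_k/\norm{\eta_k}_{x_k}$, giving the $Lt\norm{\eta_k}_{x_k}^2$ bound) is exactly that standard route. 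The only implicit point, which you already acknowledge, is that $\ip{g_k}{\eta_k}_{x_k}<0$ must hold for the Wolfe framework to apply, as it does throughout the paper.
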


The proof of this theorem is along the lines of Zoutendijk's theorem in Euclidean space (see \cite[Theorem 3.3]{ring2012optimization}). Global convergence proofs for Riemannian conjugate gradient methods are often based on Zoutendijk's theorem. Theorem \ref{thm:Hybrid2convergence} guarantees global convergence of the Hybrid2 method \eqref{eq:Hybrid2}. It is a generalization of the convergence theorem of the Hybrid2 method in Euclidean space \cite{gilbert1992global}.

\begin{theorem}~\label{thm:Hybrid2convergence}
Let $f:M \rightarrow \real$ be a function satisfying Assumption \ref{asm:main}. If each $\alpha_k>0$ satisfies the strong Wolfe conditions \eqref{eq:Armijo} and \eqref{eq:sWolfe}, with $0 < c_1 < c_2 < 1/2$, and $\beta_{k+1}$ satisfies $\abs{\beta_{k+1}} \leq \beta_{k+1}^\mathrm{FR}$, then any sequence $\{x_k\}_{k=0,1\cdots}$ generated by Algorithm \ref{alg:conjugate} satisfies
\begin{align}~\label{eq:Hybrid2converge}
\liminf_{k \to \infty}\norm{g_k}_{x_k}=0.
\end{align}
\end{theorem}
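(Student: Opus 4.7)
The plan is to follow the classical Al-Baali--style argument used for Fletcher--Reeves convergence, adapted to the Riemannian setting by exploiting the scaled vector transport. The proof will be by contradiction: assume that \eqref{eq:Hybrid2converge} fails, so there exists $\gamma>0$ with $\norm{g_k}_{x_k}\geq\gamma$ for all $k$. Theorem \ref{thm:Hybrid2sufficient} already guarantees that every $\eta_k$ is a sufficient descent direction with two-sided bounds on $\ip{g_k}{\eta_k}_{x_k}/\norm{g_k}_{x_k}^2$; combined with Zoutendijk's theorem (Theorem \ref{thm:Zoutendijk}), these bounds immediately yield
\begin{align*}
\sum_{k=0}^{\infty}\frac{\norm{g_k}_{x_k}^4}{\norm{\eta_k}_{x_k}^2}<\infty.
\end{align*}
The goal is therefore to contradict this by showing that, under the standing hypothesis $\norm{g_k}_{x_k}\geq\gamma$, the same series diverges.

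To this end, I would obtain a recursive upper bound on $\norm{\eta_{k+1}}_{x_{k+1}}^2$. Expanding the square using \eqref{eq:direction} gives three terms. The mixed term is handled by the strong Wolfe condition \eqref{eq:sWolfe} together with the identity $\vtran^S_{\alpha_k\eta_k}(\eta_k)=s_k\vtran^R_{\alpha_k\eta_k}(\eta_k)$ (with $s_k\in[0,1]$ as in the proof of Theorem \ref{thm:Hybrid2sufficient}) and the left inequality of \eqref{eq:Hybrid2descent}; the quadratic term is handled by the defining property $\norm{\vtran^S_{\alpha_k\eta_k}(\eta_k)}_{x_{k+1}}\leq\norm{\eta_k}_{x_k}$ of the scaled vector transport and by $|\beta_{k+1}|\leq\beta_{k+1}^{\mathrm{FR}}=\norm{g_{k+1}}_{x_{k+1}}^2/\norm{g_k}_{x_k}^2$. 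Dividing through by $\norm{g_{k+1}}_{x_{k+1}}^4$, the recursion should telescope into an estimate of the form
\begin{align*}
\frac{\norm{\eta_{k+1}}_{x_{k+1}}^2}{\norm{g_{k+1}}_{x_{k+1}}^4}\leq C\sum_{j=0}^{k+1}\frac{1}{\norm{g_j}_{x_j}^2},
\end{align*}
with a constant $C$ depending only on $c_2$, where the base case $k=0$ is immediate from $\eta_0=-g_0$.

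Finally, using $\norm{g_k}_{x_k}\geq\gamma$ in the right-hand side yields $\norm{\eta_{k+1}}_{x_{k+1}}^2/\norm{g_{k+1}}_{x_{k+1}}^4\leq C(k+2)/\gamma^2$, so $\norm{g_{k+1}}_{x_{k+1}}^4/\norm{\eta_{k+1}}_{x_{k+1}}^2\geq \gamma^4/(C(k+2))$, and summing over $k$ diverges, contradicting Zoutendijk's inequality. Hence $\liminf_{k\to\infty}\norm{g_k}_{x_k}=0$.

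I expect the main technical obstacle to be the careful bookkeeping in the recursive bound on $\norm{\eta_{k+1}}_{x_{k+1}}^2$: the scaled vector transport $\vtran^S$ does not preserve inner products, so the strong Wolfe inequality \eqref{eq:sWolfe}, which is stated in terms of $\vtran^R$, must be imported through the factor $s_k$, exactly as in the derivation of \eqref{eq:Hybrid2induction}. Establishing that this introduces only a constant factor (rather than something that can blow up with $k$) is precisely what makes the scaled transport essential here, and is the step where the hypothesis $\abs{\beta_{k+1}}\leq\beta_{k+1}^{\mathrm{FR}}$ is consumed. Everything else is a routine adaptation of the Euclidean Fletcher--Reeves/Hybrid2 convergence proof of \cite{gilbert1992global,al1985descent}.
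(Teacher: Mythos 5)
Your proposal is correct and follows essentially the same route as the paper's own proof: contradiction via a lower bound $\norm{g_k}_{x_k}\geq\gamma$, the recursive bound $\norm{\eta_k}_{x_k}^2\leq\hat{c}\norm{g_k}_{x_k}^2+\beta_k^2\norm{\eta_{k-1}}_{x_{k-1}}^2$ obtained exactly as you describe (mixed term via \eqref{eq:sWolfe}, the factor $s_k$, and the left inequality of \eqref{eq:Hybrid2descent}; quadratic term via the scaled-transport norm bound and $\abs{\beta_k}\leq\beta_k^{\mathrm{FR}}$), telescoping to $\norm{\eta_k}_{x_k}^2\leq\hat{c}\gamma^{-2}(k+1)\norm{g_k}_{x_k}^4$, and contradiction with Zoutendijk. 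The only blemish is the constant $\gamma^4$ in your final lower bound, which should be $\gamma^2$, but this does not affect the divergence of the harmonic-type series and hence the conclusion.
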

\begin{proof}
We prove \eqref{eq:Hybrid2converge} by contradiction. If $g_{k_0} = 0$ for some $k_0$, then \eqref{eq:Hybrid2converge} follows. Assume that
\begin{align*}
\liminf_{k \to \infty}\norm{g_k}_{x_k} > 0.
\end{align*}
Then, noting $\norm{g_k}_{x_k}\neq 0$ for all $k$, there exists $\gamma > 0$ such that
\begin{align*}
\norm{g_k}_{x_k} \geq \gamma > 0,
\end{align*}
for all $k$. From \eqref{eq:sWolfe} and \eqref{eq:Hybrid2descent}, we have
\begin{align*}
\abs{\ip{g_k}{\vtran^R_{\alpha_{k-1}\eta_{k-1}}(\eta_{k-1})}_{x_k}} &\leq -c_2\ip{g_{k-1}}{\eta_{k-1}}_{x_{k-1}} \\
&\leq \frac{c_2}{1-c_2}\norm{g_{k-1}}_{x_{k-1}}^2.
\end{align*}
Thus, from \eqref{eq:direction} and \eqref{eq:Hybrid2descent}, and using the condition $\abs{\beta_k} \leq \beta^\mathrm{FR}_k = \norm{g_k}_{x_k}^2 / \norm{g_{k-1}}_{x_{k-1}}^2$, we have
\begin{align*}
\norm{\eta_k}_{x_k}^2 &\leq \norm{g_k}_{x_k}^2 + 2s_k\abs{\beta_k\ip{g_k}{\vtran^R_{\alpha_{k-1}\eta_{k-1}}(\eta_{k-1})}_{x_k}} + \norm{\beta_k\vtran^S_{\alpha_{k-1}\eta_{k-1}}(\eta_{k-1})}^2_{x_k} \\
&\leq \norm{g_k}_{x_k}^2 + \frac{2c_2}{1-c_2}\abs{\beta_k}\norm{g_{k-1}}^2_{x_{k-1}} + \beta_k^2\norm{\eta_{k-1}}_{x_{k-1}}^2 \\
&\leq \hat{c}\norm{g_k}_{x_k}^2 + \beta_k^2\norm{\eta_{k-1}}_{x_{k-1}}^2,
\end{align*}
where $\hat{c} := (1 + c_2) / (1 - c_2) > 1$. Applying this equation repeatedly, we obtain
\begin{align*}
\norm{\eta_k}_{x_k}^2 &\leq \hat{c}\norm{g_k}_{x_k}^2 + \beta_k^2\left(\hat{c}\norm{g_{k-1}}_{x_{k-1}}^2 + \beta_{k-1}^2\norm{\eta_{k-2}}_{x_{k-2}}^2\right) \\
&\leq \hat{c}\left(\norm{g_k}_{x_k}^2 + \beta_k^2\norm{g_{k-1}}_{x_{k-1}}^2 + \cdots + \beta_k^2\beta_{k-1}^2\cdots\beta_2^2\norm{g_1}_{x_1}^2\right) + \beta_k^2\beta_{k-1}^2\cdots\beta_1^2\norm{\eta_0}_{x_0}^2 \\
&\leq \hat{c}\norm{g_k}_{x_k}^4\left(\frac{1}{\norm{g_k}_{x_k}^2} + \frac{1}{\norm{g_{k-1}}_{x_{k-1}}^2} + \cdots + \frac{1}{\norm{g_1}_{x_1}^2}\right) + \frac{\norm{g_k}_{x_k}^4}{\norm{g_0}_{x_0}^2} \\
&< \hat{c}\norm{g_k}_{x_k}^4\sum_{j=0}^k\frac{1}{\norm{g_j}_{x_j}^2} \\
&\leq \frac{\hat{c}}{\gamma^2}\norm{g_k}_{x_k}^4(k+1).
\end{align*}
This implies that
\begin{align*}
\frac{\norm{g_k}_{x_k}^4}{\norm{\eta_k}_{x_k}^2} \geq \frac{\gamma^2}{\hat{c}(k+1)},
\end{align*}
which together with \eqref{eq:Hybrid2descent}, gives
\begin{align*}
\sum_{k=0}^\infty \frac{\ip{g_k}{\eta_k}_{x_k}^2}{\norm{\eta_k}^2_{x_k}} &= \sum_{k=0}^\infty \frac{\norm{g_k}_{x_k}^4}{\norm{\eta_k}_{x_k}^2}\frac{\ip{g_k}{\eta_k}_{x_k}^2}{\norm{g_k}_{x_k}^4} \\
&\geq \left( \frac{2c_2 - 1}{1 -c_1} \right)^2 \sum_{k=0}^\infty \frac{\gamma^2}{\hat{c}(k+1)} \\
& = \infty.
\end{align*}
This contradicts \eqref{eq:Zoutendijk} in Zoutendijk's theorem (Theorem \ref{thm:Zoutendijk}) and completes the proof. \qed
\end{proof}

\subsection{Sufficient Descent Property of the SD method}
Theorem \ref{thm:SDsufficient} asserts that the SD method \eqref{eq:SD} produces sufficient descent directions \eqref{eq:sufficient} regardless of the choice of the step size $\alpha_k$.
\begin{theorem}~\label{thm:SDsufficient}
Let $f:M \rightarrow \real$ be a smooth function. If $\beta_{k+1} = \beta_{k+1}^\mathrm{SD}$, then any sequence $\{x_k\}_{k=0,1\cdots}$ generated by Algorithm \ref{alg:conjugate} satisfies
\begin{align}~\label{eq:SDsufficient}
\ip{g_k}{\eta_k}_{x_k} \leq -\left(1-\frac{1}{4\mu}\right)\norm{g_{k}}_{x_k}^2.
\end{align}
\end{theorem}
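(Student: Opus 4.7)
The plan is to verify the sufficient descent bound directly at each iteration, without induction, by exploiting the special structure of $\beta_{k+1}^{\mathrm{SD}}$. The base case $k=0$ is essentially free: since $\eta_0=-g_0$, we have $\ip{g_0}{\eta_0}_{x_0}=-\norm{g_0}_{x_0}^2$, and the assumption $\mu>1/4$ makes $0<1-1/(4\mu)<1$, so the inequality \eqref{eq:SDsufficient} holds trivially at $k=0$. For $k\ge 1$, the bound depends only on quantities at $x_k$ and the particular form of $\beta_k^{\mathrm{SD}}$; crucially, it does not require any inductive hypothesis on $\ip{g_{k-1}}{\eta_{k-1}}_{x_{k-1}}$.

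The main step is a direct computation. First I would substitute the search direction \eqref{eq:direction} into $\ip{g_{k+1}}{\eta_{k+1}}_{x_{k+1}}$ to obtain
\begin{align*}
\ip{g_{k+1}}{\eta_{k+1}}_{x_{k+1}} = -\norm{g_{k+1}}_{x_{k+1}}^2 + \beta_{k+1}^{\mathrm{SD}}\,\ip{g_{k+1}}{\vtran^S_{\alpha_k\eta_k}(\eta_k)}_{x_{k+1}}.
\end{align*}
Then, using the defining identity \eqref{eq:SD} together with $\beta_{k+1}=\ip{g_{k+1}}{\xi_{k+1}}_{x_{k+1}}$, I would abbreviate $a:=\ip{g_{k+1}}{\xi_{k+1}}_{x_{k+1}}$, $b:=\ip{g_{k+1}}{\vtran^S_{\alpha_k\eta_k}(\eta_k)}_{x_{k+1}}$, and $c:=\norm{\xi_{k+1}}_{x_{k+1}}$, so that the cross term becomes $ab-\mu c^{2}b^{2}$.

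The key algebraic step is then bounding this cross term. By the Cauchy--Schwarz inequality $|a|\le \norm{g_{k+1}}_{x_{k+1}}\,c$, so
\begin{align*}
ab - \mu c^{2} b^{2} \;\le\; \norm{g_{k+1}}_{x_{k+1}}\,c\,|b| - \mu (c|b|)^{2}.
\end{align*}
Viewing the right-hand side as a concave quadratic in the single variable $t:=c|b|\ge 0$, its maximum over $t\in\real$ is attained at $t=\norm{g_{k+1}}_{x_{k+1}}/(2\mu)$ and equals $\norm{g_{k+1}}_{x_{k+1}}^{2}/(4\mu)$. Substituting this back gives
\begin{align*}
\ip{g_{k+1}}{\eta_{k+1}}_{x_{k+1}} \;\le\; -\norm{g_{k+1}}_{x_{k+1}}^{2} + \frac{1}{4\mu}\norm{g_{k+1}}_{x_{k+1}}^{2} = -\left(1-\frac{1}{4\mu}\right)\norm{g_{k+1}}_{x_{k+1}}^{2},
\end{align*}
which is exactly \eqref{eq:SDsufficient} at index $k+1$.

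There is essentially no obstacle here beyond bookkeeping: no assumption on the line search, no assumption on the vector transport (the Ring--Wirth condition is not needed), and no inductive hypothesis. The entire content of the theorem is the fact that the SD correction $-\mu\norm{\xi_{k+1}}^{2}\ip{g_{k+1}}{\vtran^S_{\alpha_k\eta_k}(\eta_k)}$ was designed precisely to cancel the worst-case contribution of the original $\beta_{k+1}=\ip{g_{k+1}}{\xi_{k+1}}$ term through a completing-the-square argument. The only subtlety worth highlighting explicitly in the write-up is that the bound arises from maximizing a one-variable quadratic after applying Cauchy--Schwarz, and that the hypothesis $\mu>1/4$ ensures $1-1/(4\mu)>0$ so that \eqref{eq:SDsufficient} is a genuine sufficient descent condition with positive constant $\kappa=1-1/(4\mu)$.
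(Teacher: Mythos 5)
Your proposal is correct and follows essentially the same route as the paper: expand $\ip{g_{k+1}}{\eta_{k+1}}_{x_{k+1}}$ using \eqref{eq:direction} and \eqref{eq:SD}, then show the cross term $ab-\mu c^2b^2$ is at most $\frac{1}{4\mu}\norm{g_{k+1}}_{x_{k+1}}^2$. The paper achieves this last bound in one step by applying $\ip{u}{v}\leq(\norm{u}^2+\norm{v}^2)/2$ to $u=\frac{1}{\sqrt{2\mu}}g_{k+1}$ and $v=\sqrt{2\mu}\,b\,\xi_{k+1}$, whereas you use Cauchy--Schwarz followed by maximizing the scalar quadratic $t\mapsto\norm{g_{k+1}}t-\mu t^2$; these are the same completing-the-square argument in different clothing, and your observations that no induction and no line-search assumption are needed match the paper.
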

\begin{proof}
From \eqref{eq:SD}, we obtain
\begin{align}~\label{eq:HZtemp}
\begin{split}
\ip{g_k}{\eta_k}_{x_k} = &-\norm{g_k}_{x_k}^2 + \ip{g_k}{\xi_k}_{x_k}\ip{g_k}{\vtran^S_{\alpha_{k-1}\eta_{k-1}}(\eta_{k-1})}_{x_k} \\
& \quad - \mu\norm{\xi_k}_{x_k}^2\ip{g_k}{\vtran^S_{\alpha_{k-1}\eta_{k-1}}(\eta_{k-1})}_{x_k}^2.
\end{split}
\end{align}
An upper bound for the middle term in \eqref{eq:HZtemp} is obtained using the inequality,
\begin{align*}
\ip{u_k}{v_k}_{x_k} \leq \frac{\norm{u_k}_{x_k}^2 + \norm{v_k}_{x_k}^2}{2}
\end{align*}
with the choice.
\begin{align*}
u_k := \frac{1}{\sqrt{2\mu}}g_k \quad and \quad v_k := \sqrt{2\mu}\ip{g_k}{\vtran^S_{\alpha_{k-1}\eta_{k-1}}(\eta_{k-1})}\xi_k.
\end{align*}
Then, we have
\begin{align*}
&\ip{g_k}{\xi_k}_{x_k}\ip{g_k}{\vtran^S_{\alpha_{k-1}\eta_{k-1}}(\eta_{k-1})}_{x_k} \\
&\qquad \leq \frac{1}{4\mu}\norm{g_k}_{x_k}^2 + \mu\norm{\xi_k}_{x_k}^2\ip{g_k}{\vtran^S_{\alpha_{k-1}\eta_{k-1}}(\eta_{k-1})}_{x_k}^2.
\end{align*}
Combining this with \eqref{eq:HZtemp}, we obtain \eqref{eq:SDsufficient}. \qed
\end{proof}

\section{Line Search Algorithm on Riemannian Manifolds}~\label{sec:LineSearch}
In this section, we review two line search algorithms on Riemannian manifolds. In Algorithm \ref{alg:conjugate}, we need to use a line search algorithm to determine the step size $\alpha_k$. A backtracking line search algorithm is widely used in optimization algorithms in Euclidean space (see \cite[Chapter 3, Algorithm 3.1]{nocedal2006numerical}) and on Riemannian manifolds \cite{absil2008optimization} to find a step size that satisfies the Armijo condition \eqref{eq:Armijo}. Algorithm \ref{alg:BacktrackArmijo} is a backtracking line search on Riemannian manifolds \cite[Algorithm 1]{absil2008optimization}. This algorithm multiplies a positive constant $\rho > 0$ until a step size $\alpha$ satisfying the Armijo condition is found.

\begin{algorithm}
\caption{Backtracking line search on Riemannian manifold $M$ \cite[Algorithm 1]{absil2008optimization}. \label{alg:BacktrackArmijo}}
\begin{algorithmic}[1]
\REQUIRE A smooth function $f:M\rightarrow\real$, a point $x \in M$, a descent direction $\eta \in T_xM$, scalars $0 < \alpha_{\mathrm{hi}}$, $\rho \in (0,1)$.
\ENSURE A positive step size $\alpha > 0$ satisfying the Armijo condition \eqref{eq:Armijo}.
\STATE $\alpha \leftarrow \alpha_{\mathrm{hi}}$
\WHILE{$f(R_{x}(\alpha\eta)) > f(x) + c_1\alpha \ip{\grad f(x)}{\eta}_{x}$}
\STATE $\alpha \leftarrow \rho\alpha$
\ENDWHILE
\RETURN $\alpha$
\end{algorithmic}
\end{algorithm}

However, a backtracking line search algorithm cannot be used for the Wolfe or the strong Wolf conditions. To find a step size satisfying the \emph{strong} Wolfe conditions, In \cite[Section 5.1]{sato2015dai}, Sato presented Algorithm \ref{alg:LineSearch}, a generalization of the algorithm in \cite[Chapter 3, Algorithm 3.5]{nocedal2006numerical} for strong Wolfe conditions in Euclidean space. Algorithm \ref{alg:LineSearch} calls the \emph{zoom} function (Algorithm \ref{alg:Zoom}), which successively decreases the size of the interval until an acceptable step size is found (see \cite[Chapter 3, Algorithm 3.6]{nocedal2006numerical}). The parameter $\alpha_\mathrm{hi}$ is a user-supplied bound on the maximum step size. Algorithm \ref{alg:LineSearch} returns a positive step size, $\alpha_\star > 0$, that satisfies the strong Wolfe conditions. If we find a step size satisfying the Wolfe conditions, we replace the condition of step 6 of the Algorithms \ref{alg:LineSearch} and \ref{alg:Zoom} with $\phi^\prime(\alpha_i) \geq c_2\phi^\prime(0)$ (see \cite[Section 5.1]{sato2015dai}).

\begin{algorithm}
\caption{Line search algorithm on Riemannian manifold $M$ \cite[Section 5.1]{sato2015dai}. \label{alg:LineSearch}}
\begin{algorithmic}[1]
\REQUIRE A smooth function $f:M\rightarrow\real$, a point $x \in M$, a descent direction $\eta \in T_xM$, scalars $0<c_1<c_2<1$, $0 < \alpha_{\mathrm{hi}}$ and $\alpha_0 \in (0,\alpha_\mathrm{hi})$.
\ENSURE A positive step size $\alpha > 0$ satisfying the \emph{strong} Wolfe conditions \eqref{eq:Armijo} and \eqref{eq:sWolfe}.
\STATE Set $\phi(\alpha) = f(R_x(\alpha\eta))$.
\STATE $i \leftarrow 0$.
\LOOP
\IF{$\phi(\alpha_i) > \alpha(0) + \alpha c_1\phi^\prime(0)$ \OR $[\phi(\alpha_i) \geq \phi(\alpha_{i-1})$ \AND $i \geq 1]$ }
\STATE Set $\alpha_\star = \mathrm{Zoom}(\alpha_{i-1}, \alpha_i)$ and stop.
\ELSIF{$\abs{\phi^\prime(\alpha_i)} \leq -c_2\phi^\prime(0)$}
\STATE Set $\alpha_\star = \alpha_i$ and stop.
\ELSIF{$\phi^\prime(0) \geq 0$}
\STATE Set $\alpha_\star = \mathrm{Zoom}(\alpha_i, \alpha_{i-1})$ and stop.
\ENDIF
\STATE Choose $\alpha_{i+1} \in (\alpha_i, \alpha_\mathrm{hi})$.
\STATE $i \leftarrow i+1$.
\ENDLOOP
\RETURN $\alpha_\star$
\end{algorithmic}
\end{algorithm}

\begin{algorithm}
\caption{Zoom \cite[Section 5.1]{sato2015dai}, \cite[Chapter 3, Algorithm 3.6]{nocedal2006numerical} \label{alg:Zoom} }
\begin{algorithmic}[1]
\REQUIRE Scalars $\alpha_\mathrm{min}, \alpha_\mathrm{max} > 0$, and $\phi(\alpha) = f(R_x(\alpha\eta))$.
\ENSURE $\alpha = \mathrm{Zoom}(\alpha_\mathrm{min}, \alpha_\mathrm{max})$.
\LOOP
\STATE Interpolate (using quadratic, cubic, or bisection) to find a trial step length $\alpha_j \in (\alpha_\mathrm{min}, \alpha_\mathrm{max})$.
\IF{$\phi(\alpha_j) > \phi(0) + c_1\alpha_j\phi^\prime(0)$ \OR $\phi(\alpha_j) \geq \phi(\alpha_\mathrm{min})$}
\STATE $\alpha_\mathrm{max} \leftarrow \alpha_j$
\ELSE
\IF{$\abs{\phi^\prime(\alpha_j)} \leq -c_2\phi^\prime(0)$}
\STATE Set $\alpha_\star = \alpha_j$ and stop.
\ELSIF{$\phi^\prime(\alpha_j) (\alpha_\mathrm{max} - \alpha_\mathrm{min}) \geq 0$}
\STATE $\alpha_\mathrm{max} \leftarrow \alpha_\mathrm{min}$.
\ENDIF
\STATE $\alpha_\mathrm{min} \leftarrow \alpha_j$.
\ENDIF
\ENDLOOP
\RETURN $\alpha_\star$
\end{algorithmic}
\end{algorithm}

\section{Numerical Experiments}~\label{sec:NE}
Our experiments used source code based on \texttt{pymanopt}\footnote{\url{https://www.pymanopt.org/}} (see \cite{townsend2016pymanopt}).
In addition, Algorithm \ref{alg:LineSearch} was based on an implementation by SciPy\footnote{\url{https://docs.scipy.org/doc/scipy/reference/}} in Euclidean space.
Python implementations of the methods used in the numerical experiments are available at \url{https://github.com/iiduka-researches/202104-sufficient}.
We solved four different Riemannian optimization problems (Problems \ref{pbl:rayleigh}--\ref{pbl:off-diag}).

Problem \ref{pbl:rayleigh} is the Rayleigh-quotient minimization problem on the unit sphere (see \cite[Chapter 4.6]{absil2008optimization}).
\begin{problem}~\label{pbl:rayleigh}
For $A \in \mathcal{S}^n_{++}$,
\begin{align*}
\textrm{minimize}& \quad f(x)=x^\top Ax, \\
\textrm{subject to}& \quad x \in \mathbb{S}^{n-1} := \{ x \in \real^{n} : \norm{x} = 1\},
\end{align*}
where $\norm{\cdot}$ denotes the Euclidean norm and $\mathcal{S}^n_{++}$ denotes the set of all $n \times n$ symmetric positive-definite matrices.
\end{problem}
In the experiments, we set $n = 100$ and generated a matrix $A \in \mathcal{S}^n_{++}$ with randomly chosen elements by using \texttt{sklearn.datasets.make\_spd\_matrix}.

Problem \ref{pbl:brockett} is the Brockett-cost-function minimization problem on a Stiefel manifold (see \cite[Chapter 4.8]{absil2008optimization}).
\begin{problem}~\label{pbl:brockett}
For $A \in \mathcal{S}^n_{++}$ and $N = \mathrm{diag}(\mu_0, \cdots ,\mu_p)$ $(0 \leq \mu_0\leq \cdots \leq\mu_p)$,
\begin{align*}
\textrm{minimize}& \quad f(X)=\mathrm{tr}(X^\top AXN)\\
\textrm{subject to}& \quad X \in \mathrm{St}(p,n) := \{X \in \real^{n \times p} : X^\top X=I_p\}.
\end{align*}
\end{problem}
In the experiments, we set $p=5$, $n=20$ and $N := \mathrm{diag}(1, 2, 3, 4, 5)$ and generated a matrix $A \in \mathcal{S}^n_{++}$ with randomly chosen elements by using \texttt{sklearn.datasets.make\_spd\_matrix}.

In \cite{vandereycken2013low}, Vandereycken discussed the following robust matrix completion problem (Problem \ref{pbl:completion}).
\begin{problem}~\label{pbl:completion}
For $A \in \real^{m \times n}$, and a subset $\Omega$ of the complete set of entries $\{1, \cdots ,m\}\times\{1, \cdots ,n\}$,
\begin{align*}
\textrm{minimize}& \quad f(X)=\norm{P_\Omega(X - A)}^2_F, \\
\textrm{subject to}& \quad X \in M_k := \{ X \in \real^{m \times n} : \mathrm{rank}(X) = k\},
\end{align*}
where $\norm{\cdot}_F$ denotes the Frobenius norm and
\begin{align*}
P_\Omega : \real^{m \times n} \rightarrow \real^{m \times n}, X_{ij} \mapsto
\begin{cases}
X_{ij} & (i,j) \in \Omega \\
0 & (i,j) \not\in \Omega
\end{cases}.
\end{align*}
\end{problem}
In the experiments, we set $m=n=100$ and $k=4$, and $\Omega$ contained each pair $(i,j) \in \{1, \cdots ,m\}\times\{1, \cdots ,n\}$ with probability $1/2$. Moreover, we used a matrix $A \in \real^{m \times n}$ that was generated with randomly chosen elements by using \texttt{numpy.random.randn}.

In \cite{absil2006joint}, Absil and Gallivan introduced the following off-diagonal cost function minimization problem on oblique manifolds (Problem \ref{pbl:off-diag}).
\begin{problem}~\label{pbl:off-diag}
For $C_i \in \mathcal{S}^{n}$ $(i=1, \cdots ,N)$,
\begin{align*}
\textrm{minimize}& \quad f(X)=\sum_{i=1}^N\norm{X^\top C_iX - \mathrm{ddiag}(X^\top C_iX)}^2_F \\
\textrm{subject to}& \quad X \in \mathcal{OB}(n,p):=\{X \in \real^{n \times p} : \mathrm{ddiag}(X^\mathrm{T}X)=I_p\},
\end{align*}
where $\mathcal{S}^n$ denotes the set of all $n \times n$ symmetric matrices and $\mathrm{ddiag}(X)$ denotes a diagonal matrix whose diagonal elements are those of $X$.
\end{problem}
In the experiments, we set $N=10$, $n=100$ and $p=5$ and generated ten matrices $B_i \in \real^{n \times n}$ $(i=1,2, \cdots ,10)$ with randomly chosen elements by using \texttt{numpy.random.randn}. Then, we set symmetric matrices $C_i \in \mathcal{S}^n$ as $C_i := (B_i + B_i^\top) / 2$ $(i=1,2, \cdots ,10)$.

The experiments used a MacBook Air (2017) with a 1.8 GHz Intel Core i5, 8 GB 1600 MHz DDR3 memory, and version 10.14.5 of the macOS Mojave operating system. The algorithms were written in Python 3.7.6 with the NumPy 1.19.0 package and the Matplotlib 3.2.2 package. We solved the above four problems 100 times with each algorithm, that is, 400 times in total. If the stopping condition,
\begin{align*}
\norm{{\grad}f(x_k)}_{x_k}<10^{-6}
\end{align*}
was satisfied, we determined that a sequence had converged to an optimal solution. We compared seven Riemannian conjugate gradient methods, i.e., FR, DY, PRP, HS, HZ, Hybrid1, and Hybrid2 methods, and two line search algorithms, i.e., Algorithms \ref{alg:BacktrackArmijo} and \ref{alg:LineSearch}. In the HZ method, we set $\mu = 2$. In the Armijo condition \eqref{eq:Armijo} and the second condition of the strong Wolfe conditions \eqref{eq:sWolfe}, we set $c_1 = 10^{-4}$ and $c_2 = 0.9$. In Algorithm \ref{alg:BacktrackArmijo}, we set the scalars as $\alpha_\mathrm{hi} = 1$ and $\rho = 0.5$. In Algorithm \ref{alg:LineSearch}, we set the scalar as $\alpha_0 = 1$, and in step 11, we set $\alpha_i = 2\alpha_{i-1}$ (see \texttt{scipy.optimize.line\_search}).

For comparison, we calculated the performance profile \cite{dolan2002benchmarking}. The performance profile $P_s : \real \rightarrow [0, 1]$ is defined as follows: let $\mathcal{P}$ and $\mathcal{S}$ be the set of problems and solvers, respectively. For each $p \in \mathcal{P}$ and $s \in \mathcal{S}$, we defined
\begin{align*}
t_{p,s} := (\text{iterations or time required to solve problem }p\text{ by solver }s).
\end{align*}
Furthermore, we defined the performance ratio $r_{p,s}$ as
\begin{align*}
r_{p,s} := \frac{t_{p,s}}{\min_{s^\prime \in \mathcal{S}}t_{p,s^\prime}}
\end{align*}
and defined the performance profile, for all $\tau \in \real$, as
\begin{align*}
P_s(\tau) := \frac{\#\{p \in \mathcal{P} : r_{p,s} \leq \tau \}}{\#\mathcal{P}},
\end{align*}
where $\# S$ denotes the number of elements of a set $S$.

Figure \ref{fig1} plots the performance profiles of each algorithm by using Algorithm \ref{alg:BacktrackArmijo} to determine the step size. In particular, Figure \ref{fig1} (a) and (b) plot the performance profiles versus the number of iterations and the elapsed time, respectively. They show that the HZ method solved the most problems, which is about the same number as the Hybrid1 method solved. In particular, Hybrid1 solved more problems than the other methods in fewer iterations and less time. It can also be seen that Hybrid2 is not compatible with Algorithm \ref{alg:BacktrackArmijo}.

\begin{figure}[htbp]
\centering
\subfigure[iteration]{\includegraphics[width=100mm]{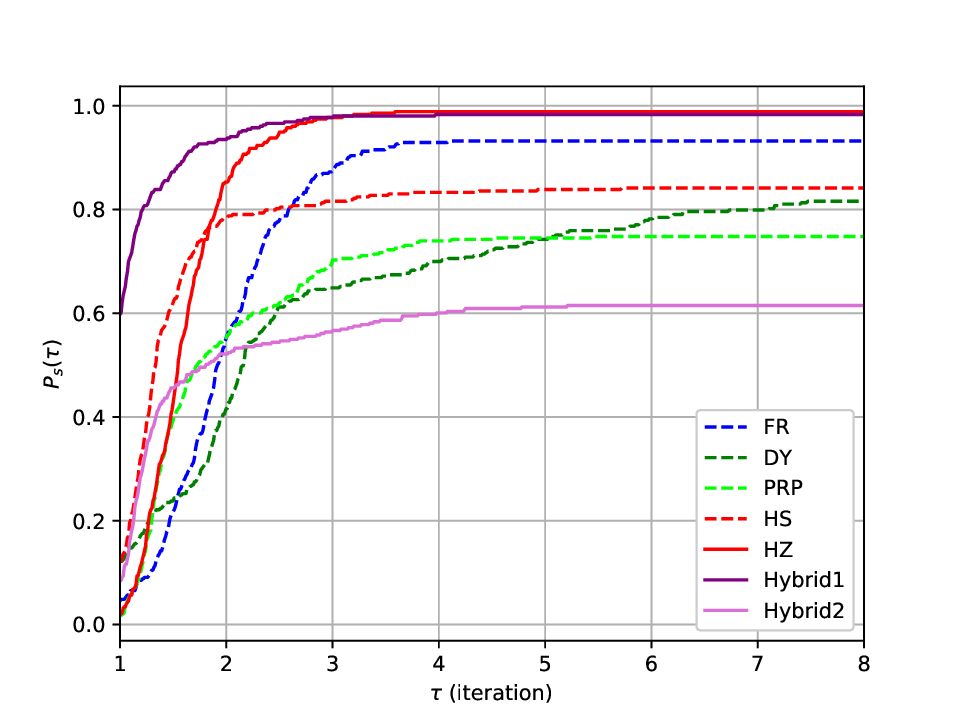}}
\subfigure[elapsed time]{\includegraphics[width=100mm]{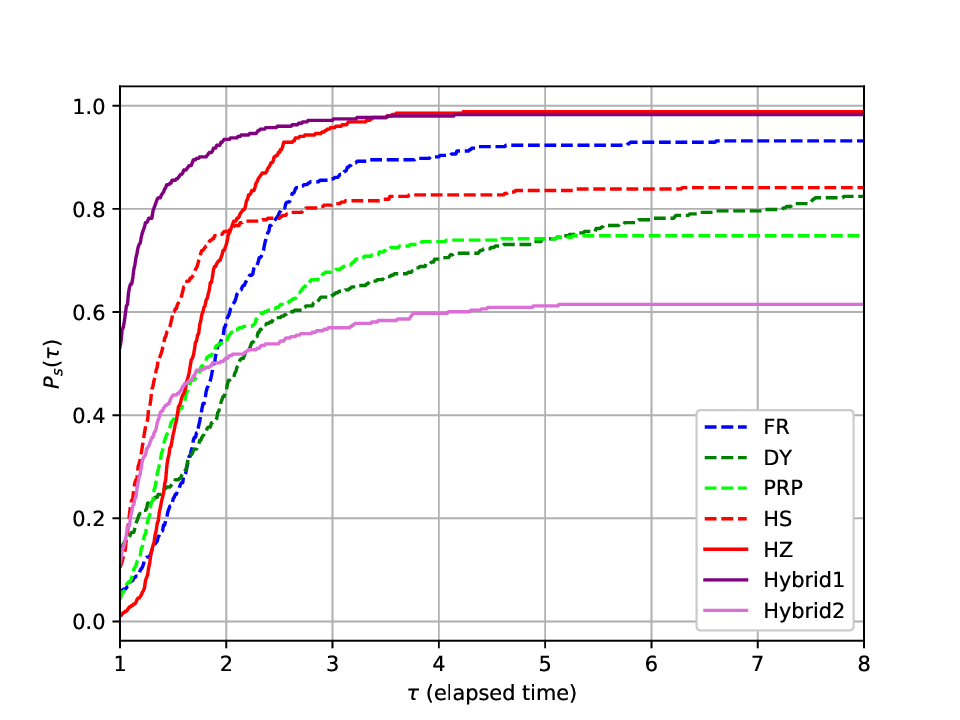}}
\caption{Performance profiles of each algorithm versus the number of iterations (a) and the elapsed time (b) by using Algorithm \ref{alg:BacktrackArmijo} to determine the step size. \label{fig1}}
\end{figure}

Figure \ref{fig2} (resp. Figure \ref{fig3}) plots the performance profiles of each algorithm by using Algorithm \ref{alg:LineSearch} to find the step size satisfying the Wolfe conditions (resp. the strong Wolfe conditions). In particular, (a) and (b) of these figures plot the performance profiles versus the number of iterations and the elapsed time, respectively. In Figure \ref{fig2}, Hybrid2 solved the second-largest number of problems, and in Figure \ref{fig3}, it solved the third-largest number. Unlike the case of using Algorithm \ref{alg:BacktrackArmijo}, Hybrid2 performed well when using Algorithm \ref{alg:LineSearch}. It can be seen that the PRP and HS methods have about the same performance, and the FR and DY methods have about the same performance.

\begin{figure}[htbp]
\centering
\subfigure[iteration]{\includegraphics[width=100mm]{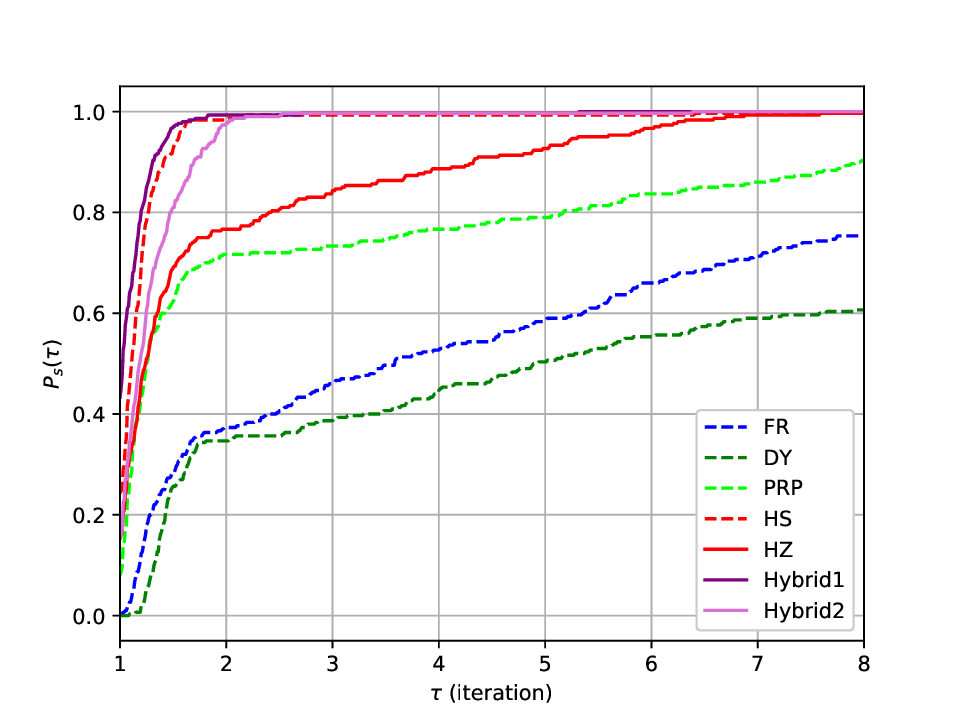}}
\subfigure[elapsed time]{\includegraphics[width=100mm]{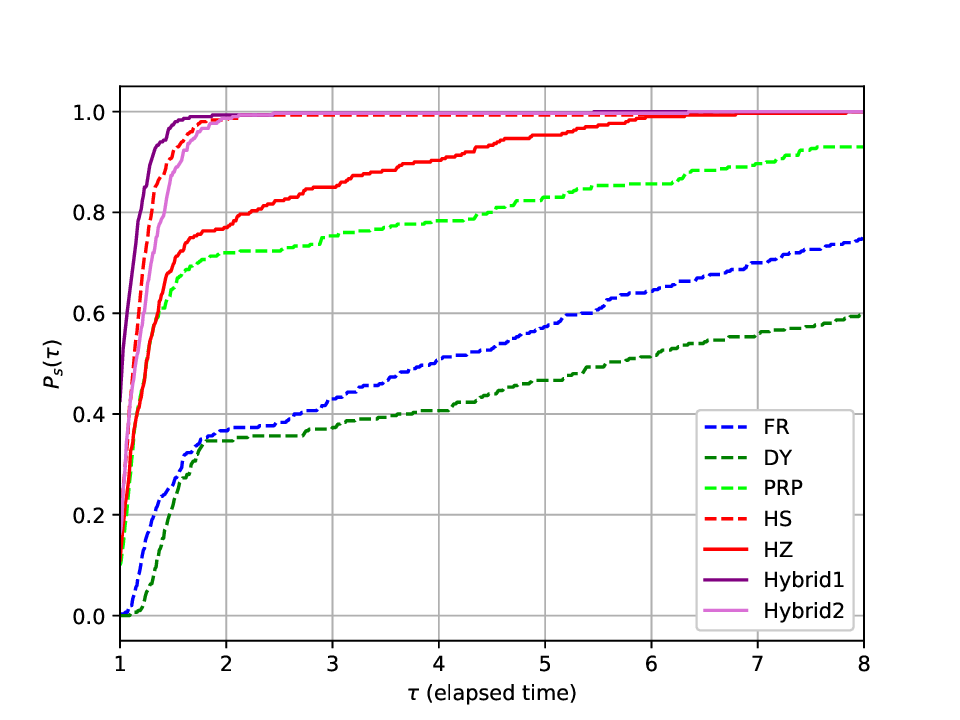}}
\caption{Performance profiles of each algorithm versus the number of iterations (a) and the elapsed time (b) by using Algorithm \ref{alg:LineSearch} to determine the step size satisfying the Wolfe conditions. \label{fig2}}
\end{figure}

\begin{figure}[htbp]
\centering
\subfigure[iteration]{\includegraphics[width=100mm]{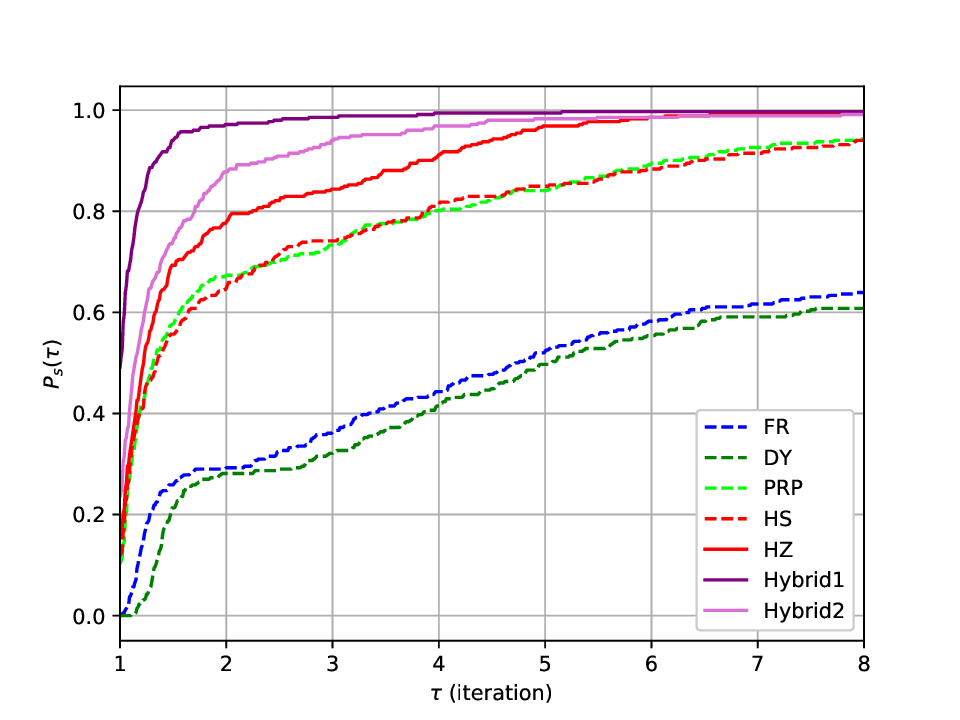}}
\subfigure[elapsed time]{\includegraphics[width=100mm]{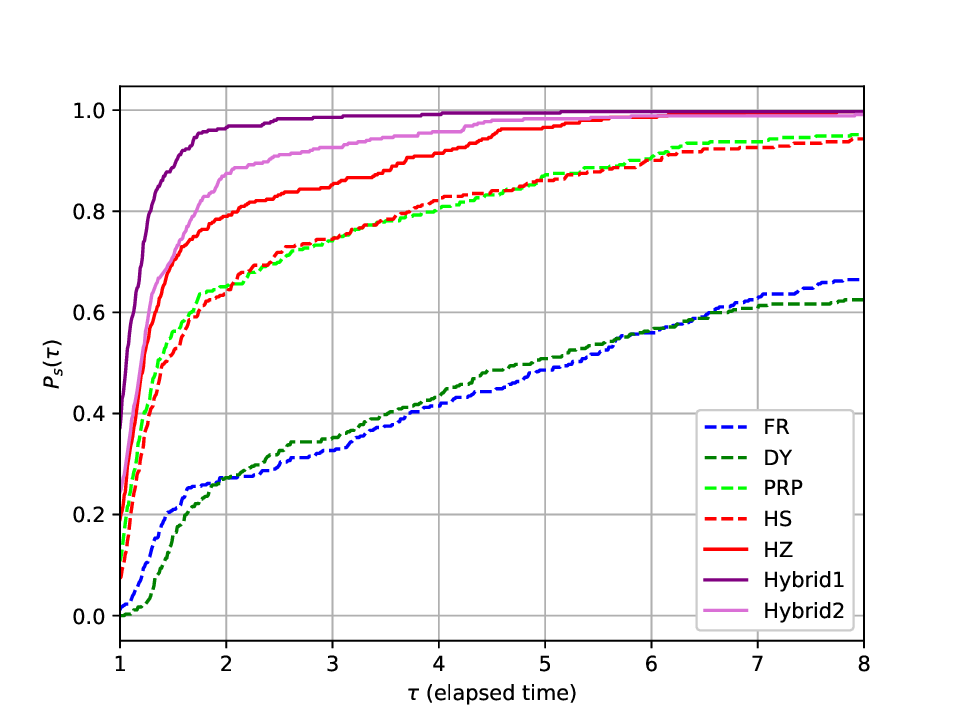}}
\caption{Performance profiles of each algorithm versus the number of iterations (a) and the elapsed time (b) by using Algorithm \ref{alg:LineSearch} to determine the step size satisfying the strong Wolfe conditions. \label{fig3}}
\end{figure}

Figure \ref{fig4} plots the performance profiles of the HZ, Hybrid1 and Hybrid2 methods by using Algorithms \ref{alg:BacktrackArmijo} and \ref{alg:LineSearch} to determine the step size satisfying the Armijo, Wolfe, and strong Wolfe conditions. In particular, Figure \ref{fig4} (a) and (b) plots the performance profile versus the number of iterations and elapsed time, respectively. Figure \ref{fig4} (a) shows that when Algorithm \ref{alg:LineSearch} is used, all methods solve the problem in fewer iterations than in the case of using Algorithm \ref{alg:BacktrackArmijo}. It can be seen from Figure \ref{fig4} (b) that Algorithm \ref{alg:LineSearch} often takes a long time to execute.

\begin{figure}[htbp]
\centering
\subfigure[iteration]{\includegraphics[width=100mm]{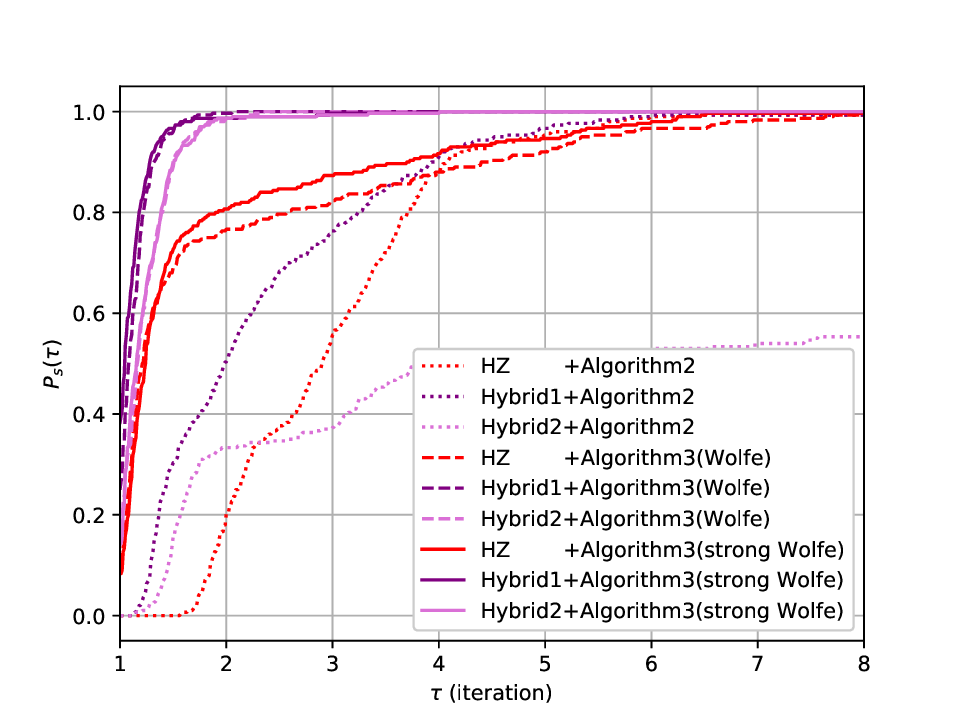}}
\subfigure[elapsed time]{\includegraphics[width=100mm]{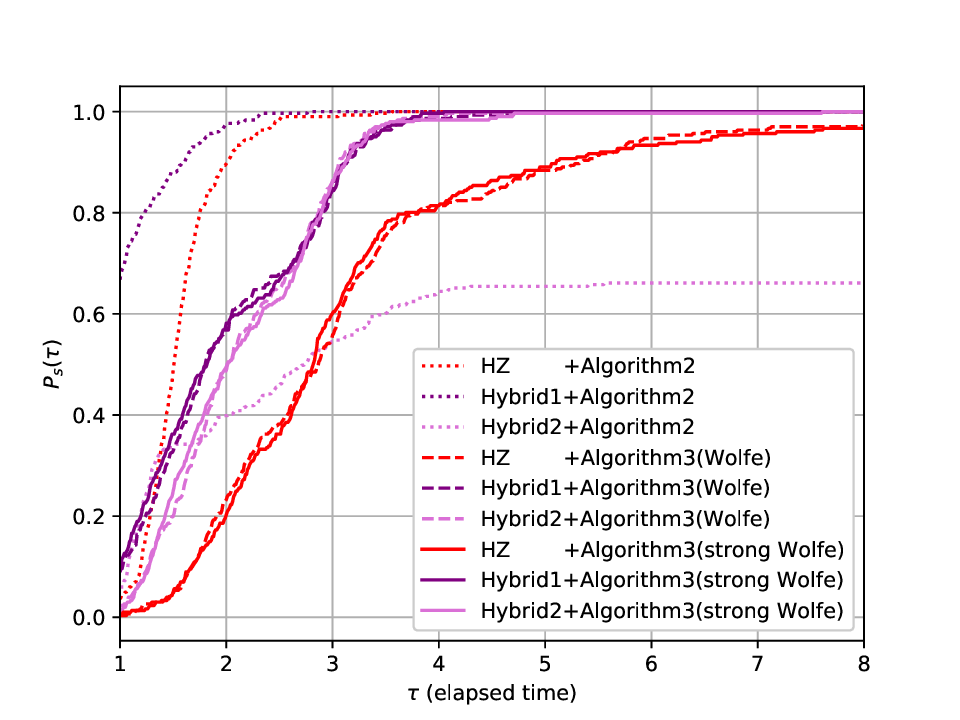}}
\caption{Performance profiles of the HZ, Hybrid1 and Hybrid2 methods versus the number of iterations (a) and the elapsed time (b) by using Algorithms \ref{alg:BacktrackArmijo} and \ref{alg:LineSearch} to determine the step size satisfying the Armijo, Wolfe, and strong Wolfe conditions. \label{fig4}}
\end{figure}

It can be seen from Figures \ref{fig1}--\ref{fig4} that Hybrid1 performed the best in all cases, but the HZ method also performed well. It can also be seen that the performances of the Riemannian conjugate gradient methods depend greatly on the type of line search used. In particular, the proposed methods (i.e., HZ and Hybrid2) performed well with Algorithm 3.

\section{Conclusion}~\label{sec:conclusion}
We generalized two nonlinear conjugate gradient methods, i.e., the HZ and Hybrid2 methods. We proved that the Hybrid2 method \eqref{eq:Hybrid2} satisfies the sufficient descent condition and converges globally under the strong Wolfe conditions. In addition, we proved that the HZ method \eqref{eq:HZ} satisfies the sufficient descent condition regardless of the type of line search used. In addition, we reviewed two kinds of line search algorithm, i.e., Algorithm \ref{alg:BacktrackArmijo} and \ref{alg:LineSearch}. In numerical experiments, we showed that the HZ and Hybrid1 methods perform well. Moreover, we showed that the performance of Riemannian conjugate gradient methods depends on the type of line search used. Hybrid2 performs better with a step size computed by Algorithm 3, as the convergence analysis guarantees. Meanwhile, the numerical results showed that the HZ method converges quickly without depending on the line search conditions. Hence, the HZ method is good for solving Riemannian optimization problems from the viewpoints of both theory and practice.

\section{acknowledgements}
We are sincerely grateful to the Editor-in-Chief, the anonymous associate editor, and the two anonymous reviewers for helping us improve the original manuscript. This work was supported by a JSPS KAKENHI Grant, Number JP18K11184.

\bibliographystyle{spmpsci_unsrt}
\bibliography{biblib}

\end{document}